\documentclass[11pt]{amsart}
\usepackage{amssymb,amsthm}
\usepackage[nosumlimits]{mathtools}
\usepackage{enumerate,tabu}
\usepackage[cal=euler,scr=rsfso]{mathalpha}
\usepackage[margin=1in]{geometry}
\usepackage{quiver}
\usepackage{stmaryrd}
\usepackage{upgreek}

\theoremstyle{plain}
\newtheorem{theorem}{Theorem}[section]
\newtheorem{proposition}[theorem]{Proposition}
\newtheorem{lemma}[theorem]{Lemma}
\newtheorem{corollary}[theorem]{Corollary}

\theoremstyle{definition}
\newtheorem{definition}[theorem]{Definition}
\newtheorem{example}[theorem]{Example}

\theoremstyle{remark}

\usepackage{hyperref}
\usepackage{xcolor}
\hypersetup{
    colorlinks,
    linkcolor={red!50!black},
    citecolor={blue!50!black},
    urlcolor={blue!80!black}
}

\newcommand{\tp}{{\scriptscriptstyle\mathsf{T}}}
\newcommand{\p}{{\scriptscriptstyle+}}
\newcommand{\pp}{{\scriptscriptstyle++}}

\newcommand{\lb}{\llbracket}
\newcommand{\rb}{\rrbracket}

\let\latexcirc=\circ
\newcommand{\ccirc}{\mathbin{\mathchoice
  {\xcirc\scriptstyle}
  {\xcirc\scriptstyle}
  {\xcirc\scriptscriptstyle}
  {\xcirc\scriptscriptstyle}
}}
\newcommand{\xcirc}[1]{\vcenter{\hbox{$#1\latexcirc$}}}
\let\circ\ccirc

\let\O\undefined
\let\P\undefined
\DeclareMathOperator{\O}{O}
\DeclareMathOperator{\GL}{GL}
\DeclareMathOperator{\P}{P}
\DeclareMathOperator{\V}{V}
\DeclareMathOperator{\St}{St}

\DeclareMathOperator{\im}{im}

\DeclareMathOperator{\conv}{conv}
\DeclareMathOperator{\tr}{tr}
\DeclareMathOperator{\Gr}{Gr}
\DeclareMathOperator{\El}{\mathcal{E}}
\DeclareMathOperator{\diag}{diag}
\DeclareMathOperator{\rank}{rank}

\begin{document}
\title{Grassmannian optimization is NP-hard}
\author[Z.~Lai]{Zehua~Lai}
\address{Department of Mathematics, University of Texas, Austin, TX 78712}
\email{zehua.lai@austin.utexas.edu}
\author[L.-H.~Lim]{Lek-Heng~Lim}
\address{Computational and Applied Mathematics Initiative, Department of Statistics,
University of Chicago, Chicago, IL 60637}
\email{lekheng@uchicago.edu}
\author[K.~Ye]{Ke Ye}
\address{KLMM, Academy of Mathematics and Systems Science, Chinese Academy of Sciences, Beijing 100190, China}
\email{keyk@amss.ac.cn}

\begin{abstract}
We show that unconstrained quadratic optimization over a Grassmannian $\Gr(k,n)$ is NP-hard. Our results cover all scenarios: (i) when $k$ and $n$ are both allowed to grow; (ii) when $k$ is arbitrary but fixed; (iii) when $k$ is fixed at its lowest possible value $1$. We then deduce the NP-hardness of unconstrained cubic optimization over the Stiefel manifold $\V(k,n)$ and the orthogonal group $\O(n)$. As an addendum we demonstrate the NP-hardness of unconstrained quadratic optimization over the Cartan manifold, i.e., the positive definite cone $\mathbb{S}^n_\pp$ regarded as a Riemannian manifold, another popular example in manifold optimization. We will also establish the nonexistence of $\mathrm{FPTAS}$ in all cases.
\end{abstract}
\maketitle

\section{Introduction}\label{sec:intro}

It is well-known that unconstrained quadratic programming is easy over the $n$-space $\mathbb{R}^n$ or $n$-sphere $\mathrm{S}^n$. The former is a simple exercise\footnote{Minimum of $\frac12 x^\tp A x - b^\tp x$ is either attained at $x_* =A^{-1}b$ if $A$ is positive definite or else it is $-\infty$. The common refrain ``QP is NP-hard'' \cite{vavaQP} refers to constrained QP  over $\mathbb{R}^n$.}  whereas the latter is polynomial-time solvable \cite[Section~4.3]{vava}.
It is therefore somewhat surprising that optimizing a quadratic polynomial over the Grassmannian $\Gr(k,n)$ is an NP-hard problem, not least because the $k =1$ case is intimately related to $\mathbb{R}^n$ and $\mathrm{S}^n$: $\Gr(1,n+1) = \mathbb{RP}^n$ is a compactification of $\mathbb{R}^n$ and $\mathrm{S}^n$ is a double-cover of $\Gr(1,n+1)$.  We will prove three variations of this result:
\begin{center}
\small
\tabulinesep=0.75ex
\begin{tabu}{@{}lll}
\textsc{assumption} & \textsc{reduction} & \textsc{section}\\\tabucline[0.5pt ] -
$k$ and $n$ can both grow & clique decision & Section~\ref{sec:both}\\
$k$ arbitrary but fixed, only $n$ can grow & clique number  & Section~\ref{sec:onlyn}\\
$k = 1$, only $n$ can grow & stability number & Section~\ref{sec:stie}
\end{tabu}
\end{center}
We first establish these using the representation $\Gr(k,n) = \{P \in \mathbb{R}^{n \times n}: P^2 = P = P^\tp,\; \tr(P) = k\}$, and then extend them to all known models of the Grassmannian.

These hardness results for the Grassmannian form the mainstay of our article, accounting for the choice of title. But by leveraging on them, we will also deduce the NP-hardness of optimization problems over some of the most common manifolds in applied mathematics:
\begin{center}
\small
\tabulinesep=0.75ex
\begin{tabu}{@{}lll}
\textsc{manifold} & \textsc{hardness result} & \textsc{reduction}\\\tabucline[0.5pt ] -
Stiefel manifold $\V(k,n)$ & unconstrained cubic programming is NP-hard & QP over $\Gr(k,n)$\\
Orthogonal group $\O(n)$ & unconstrained cubic programming is NP-hard & CP over $\V(k,n)$ \\
Cartan manifold $\mathbb{S}^n_\pp$ & unconstrained quadratic programming is NP-hard & matrix copositivity
\end{tabu}
\end{center}
Here the Stiefel manifold is modeled as the set of $n \times k$ matrices with orthonormal columns $\V(k,n)$  \cite{Stie}, and the Cartan manifold is modeled as the positive definite cone $\mathbb{S}^n_\pp$ but equipped with the Riemannian metric $\mathsf{g}_A (X,Y) = \tr(A^{-1} X A^{-1} Y)$ for $A \in \mathbb{S}^n_\pp$ \cite{Cartan1}. Note that as $\V(1,n+1) = \mathrm{S}^n$ and unconstrained quadratic programming on the $n$-sphere is polynomial-time, we need at least cubic programming (CP) to demonstrate NP-hardnesss over $\V(k,n)$ for all possible $k,n$.

\subsection*{Notations} We write $\mathrm{S}^n$ for the $n$-sphere and $\mathbb{S}^n$ for the set of $n \times n$ symmetric matrices. We write $\mathbb{S}^n_\p$ and $\mathbb{S}^n_\pp$ for the positive semidefinite and positive definite cones respectively. The operation $\diag : \mathbb{R}^{n \times n} \to \mathbb{R}^n$ takes a matrix $X$ to its vector of diagonal entries $\diag(X)$. We write $\O(n)\coloneqq \{Q \in \mathbb{R}^{n \times n} :  Q^\tp Q = I \}$ for the orthogonal group and $\GL(n) \coloneqq \{ X \in \mathbb{R}^{n \times n} : \rank(X) = n\}$ for the general linear group. A matrix group is a subgroup of $\GL(n)$ for some $n \in \mathbb{N}$. A matrix submanifold is a submanifold of $\mathbb{R}^{n \times k}$ for some $n,k \in \mathbb{N}$ and  a matrix quotient manifold is one that is given by the quotient of a matrix submanifold by a matrix group; collectively we refer to both as matrix manifolds. We identify $\mathbb{R}^n = \mathbb{R}^{n \times 1}$, i.e., an $x \in \mathbb{R}^n$ will always be a column vector; row vectors will be written $x^\tp$. We delimit equivalence classes with double brackets $\lb \, \cdot \, \rb$.

\section{Models of the Grassmann, Stiefel, and Cartan manifolds}\label{sec:mod}

As \emph{abstract manifolds}, the Grassmannian is the set of $k$-dimensional subspaces in $\mathbb{R}^n$, the noncompact Stiefel manifold is the set of $k$-frames in $\mathbb{R}^n$, the compact Stiefel manifold is the set of orthonormal $k$-frames in $\mathbb{R}^n$, and the Cartan manifold is the set of ellipsoids in $\mathbb{R}^n$ centered at origin. We denote them by $\Gr_k(\mathbb{R}^n)$, $\St_k(\mathbb{R}^n)$, $\V_k(\mathbb{R}^n)$, $\El(\mathbb{R}^n)$ respectively, following standard notations in geometry and topology, whenever we want to refer to them  as abstract manifolds.

To do anything with these manifolds, one needs a concrete \emph{model} that parameterizes points on the manifold, possibly also vectors on tangent spaces, that is convenient to work with. Put in another way, a model endows the manifold with a system of extrinsic coordinates. In the context of this article, without a model it would not even make sense to speak of quadratic or cubic programming, as the degree of a polynomial function invariably depends on the coordinates used to parameterize points on the manifold.

The models favored by practitioners, particularly those in manifold optimization, are typically chosen with a view towards computational feasibility, and basically fall into two distinct families:
\begin{enumerate}[\upshape (i)]
\item \emph{quotient models}:  represent points and tangent vectors as equivalence classes of matrices;
\item \emph{submanifold models}: represent points and tangent vectors as actual matrices.
\end{enumerate}
As we will see in Section~\ref{sec:other}, the NP-hardness claimed in the previous section will extend to every model.

The quotient models are matrix quotient manifolds $\mathcal{M} / G$ obtained from a matrix group $G$ acting on a matrix submanifold $\mathcal{M}$ through matrix multiplication. Table~\ref{tab:quo} lists all quotient models for the  Stiefel, Grassmann, and Cartan manifolds that we are aware of:
\begin{table*}[h]
\small
\caption{Quotient models}
\label{tab:quo}
\tabulinesep=0.75ex
\begin{tabu}{@{}lll}
$\mathcal{M}$ & $G$ & $\mathcal{M}/G$ \\\tabucline[0.5pt ] -
$\O(n)$ & $\O(k)\times \O(n-k)$ & $\Gr_k(\mathbb{R}^n)$ \\
$\V(k,n)$ & $\O(k)$  & $\Gr_k(\mathbb{R}^n)$ \\
$\GL(n)$ & $\P(k,n)$  & $\Gr_k(\mathbb{R}^n)$ \\
$\St(k,n)$ & $\GL(k)$  & $\Gr_k(\mathbb{R}^n)$ \\
$\O(n)$ & $\O(n-k)$ & $\V_k(\mathbb{R}^n)$ \\
$\GL(n)$ & $\P_1(k,n)$  & $\St_k(\mathbb{R}^n)$ \\
$\GL(n)$ & $\O(n)$  & $\El(\mathbb{R}^n)$
\end{tabu}
\end{table*}

The submanifolds $\V(k,n)$ and $\St(k,n)$ are respectively models for the compact and noncompact Stiefel manifolds given in Table~\ref{tab:sub}. The group of $2 \times 2$ block upper triangular matrices with invertible diagonal blocks,
\[
\P(k,n) \coloneqq \biggl\{ \begin{bmatrix}
X_1 & X_2 \\ 
0 & X_3
\end{bmatrix}
\in \GL(n) :
X_1\in \GL(k),\; X_2\in \mathbb{R}^{k\times (n-k)}, \; X_3\in \GL(n-k)
\biggr\}
\]
is  called a \emph{parabolic subgroup} of $\GL(n)$; and $\P_1(k,n)$ is the subgroup of $\P(k,n)$ with $X_1 = I_k$.

The last column in Table~\ref{tab:quo} gives the abstract manifold that $\mathcal{M} / G$ is diffeomorphic to but note that $\mathcal{M}/G$ inherits an extrinsic coordinate system given by $\mathcal{M}$ and the $G$-action that is absent in its abstract counterpart.  Some of the manifolds in Table~\ref{tab:quo} are well-known homogeneous spaces and some even symmetric spaces \cite{Helgason01} but we will not use these properties. The first two quotient models in Table~\ref{tab:quo} have formed the mainstay of Grassmannian manifold optimization with tens of thousands of works, too numerous for us to identify any handful of representative articles, and we will just mention the pioneering feat of \cite{EAS99}.

The submanifold models are simply matrix submanifolds $\mathcal{M}$ diffeomorphic to one of the abstract manifold in the last column in Table~\ref{tab:sub}, which lists all such models for the  Stiefel, Grassmann, and Cartan manifolds known to us:
\begin{table*}[h]
\small
\caption{Submanifold models}
\label{tab:sub}
\tabulinesep=0.75ex
\begin{tabu}{@{}lr@{$\null=\null$}ll}
model &  \multicolumn{2}{c}{concrete}  & abstract \\\tabucline[0.5pt ] -
positive definite model & $\mathbb{S}^n_\pp $ & $\{ A \in \mathbb{R}^{n \times n} : A^\tp = A, \; x^\tp A x > 0 \text{ for all } x \ne 0 \}$ & $\El(\mathbb{R}^n)$  \\
orthonormal model & $\V(k,n) $ & $\{Y \in \mathbb{R}^{n \times k} :  Y^\tp Y = I \}$ & $\V_k(\mathbb{R}^n)$ \\
full-rank model & $\St(k,n)$ & $\{ S \in \mathbb{R}^{n \times k} : \rank(S) = k\}$ & $\St_k(\mathbb{R}^n)$ \\
projection model & $\Gr_\pi(k, n)$ & $\{P \in \mathbb{R}^{n \times n}: P^2 = P = P^\tp,\; \tr(P) = k\}$ & $\Gr_k(\mathbb{R}^n)$ \\
involution model & $\Gr_\iota(k,n)$ & $\{Q \in \mathbb{R}^{n \times n} : Q^\tp Q =I,\; Q^\tp = Q, \; \tr(Q)=2k - n\}$ & $\Gr_k(\mathbb{R}^n)$  \\
quadratic model & $\Gr_{a,b}(k, n)$ & $\begin{multlined}[t] \{ W \in \mathbb{R}^{n \times n}: W^\tp = W, \; (W - a I)(W- bI) = 0, \\[-2ex]
 \tr(W) = ka + (n-k)b \} \end{multlined}$ & $\Gr_k(\mathbb{R}^n)$ 
\end{tabu}
\end{table*}

As in the case of the quotient models, these submanifold models endow the abstract manifold with an extrinsic coordinate system. The quadratic model for the $\Gr_k(\mathbb{R}^n)$ is an infinite family of submanifold models, one for each distinct pair of $a,b \in \mathbb{R}$, that has only recently been discovered  \cite[Theorem~6.1]{LY24}. It contains both the projection and involution models as special cases:
\[
\Gr_{1,0}(k, n) = \Gr_\pi(k,n), \qquad \Gr_{1,-1}(k, n) = \Gr_\iota(k,n).
\]
The projection model has a long history in applications including coding theory \cite{CHRSS,Conway}, machine learning \cite{EG}, optimization \cite{SSZ,Zhao}, statistics \cite{Chi}; and is also common in geometric measure theory \cite{Mattila} and differential geometry \cite{Nicolaescu}. The involution model is more recent \cite{ZLK20, ZLK24, LY24}.
We will show in Proposition~\ref{prop:redu} that any two different models in Tables~\ref{tab:quo} and \ref{tab:sub} for the same abstract manifold may be transformed to each other in polynomial-time.

By far the two most common models of the Grassmannian in pure mathematics are $\GL(n)/\P(k,n)$ in Table~\ref{tab:quo} and the \emph{Pl\"ucker model}. For the sake of completeness the latter is the image of the Pl\"ucker embedding $\Gr_k(\mathbb{R}^n) \to \mathbb{RP}^{\binom{n}{k}}$  \cite[Lecture~6]{Harris}. It is hard to imagine it ever finding practical use due to the exponential dimension of its ambient space. We will disregard it in the rest of this article.

\section{Background and assumptions}

We will discuss some background and assumptions necessary to put our complexity results on rigorous footing. In this article, ``unconstrained'' simply means that we do not impose any constraints on the optimization variables other than that the points must lie on the manifold in question. To formulate optimization problems that meet standard complexity theoretic assumptions like finite bit-length inputs (see Section~\ref{sec:cplx}), we limit our objective functions to polynomial functions with rational coefficients. Hence we will need to define what we mean by a polynomial function in the context of manifold optimization. This is straightforward for the submanifold models in Table~\ref{tab:sub} but takes some small amount of algebraic geometry \cite{Harris} and geometric invariant theory \cite{MFK94}  for the quotient models in Table~\ref{tab:quo} (see Section~\ref{sec:poly}). 

\subsection{Polynomial optimization on submanifolds and quotient manifolds}\label{sec:poly}

This section may be safely skipped for readers willing to accept on faith that it is perfectly fine to speak of a polynomial function on any of the manifolds appearing in Tables~\ref{tab:quo} and \ref{tab:sub}. In the following, we provide a careful definition of a polynomial function over these manifolds, which we are unable to find in the manifold optimization literature. This is unsurprising given that polynomials and manifolds are from different categories: polynomial functions belong with varieties in algebraic geometry; smooth functions belong with manifolds in differential geometry. In general they are incompatible but fortunately for us, every candidate in Tables~\ref{tab:quo} and \ref{tab:sub} is not only a smooth manifold but also a variety or the projection of one.

A subset $\mathcal{M} \subseteq \mathbb{R}^{n \times k}$ is a \emph{variety} \cite[p.~3]{Harris} if it is defined by equality constraints that are polynomials in the entries of the matrix variable $X = (x_{ij})$. In Table~\ref{tab:sub}, the matrix submanifolds $\V(k,n)$, $\Gr_\pi(k,n)$, $\Gr_\iota(k,n)$, and $\Gr_{a,b}(k,n)$ are clearly varieties since they are defined with polynomial equality constraints. Note that this also applies to $\O(n) = \V(n,n)$ as a special case.

The remaining cases $\St(k,n)$ and $\mathbb{S}^n_\pp $ are more subtle. Let $\pi_1 : S_1 \times S_2 \to S_1$, $\pi_1(s_1, s_2) = s_1$, be the projection map onto the first factor in a product of two sets. Recall that a matrix $X$ has full column rank iff $X^\tp X$ is nonsingular; and so we have
\begin{equation}\label{eq:proj1}
\St(k,n) = \{ X \in \mathbb{R}^{n \times k} : \det(X^\tp X) \ne 0 \}  = \pi_1 \bigl( \{ (X, t) \in \mathbb{R}^{n \times k} \times \mathbb{R} : t \det(X^\tp X) = 1 \} \bigr).
\end{equation}
Similarly, an $n \times n$ matrix is positive definite iff all its $m \coloneqq 2^n-1$ principal minors are positive; and so we have
\begin{align*}
\mathbb{S}^n_\pp  &= \{ X \in \mathbb{R}^{n \times n} : X^\tp = X, \; \Delta_i(X) > 0,\; i = 1,\dots, m\} \\
&= \pi_1 \bigl( \{ (X, t) \in \mathbb{R}^{n \times n} \times \mathbb{R}^m : X^\tp = X, \; t_i^2 \Delta_i(X)=1,\; i = 1,\dots, m\} \bigr).
\end{align*}
Here $\Delta_1(X),\dots,\Delta_m(X)$ are the principal minors of $X$. In optimization theoretic lingo, we have introduced slack variables to convert ``$\ne$'' and ``$>$'' into ``$=$'' and express $\St(k,n)$ and  $\mathbb{S}^n_\pp $ in terms of polynomial equality constraints. In algebraic geometric lingo, $\St(k,n)$ and  $\mathbb{S}^n_\pp $ are projections of varieties in higher-dimensional ambient spaces. The slack variables supply the extra dimensions.

In fact, we can say more.  In \eqref{eq:proj1}, we actually have an isomorphism
\[
\{ (X, t) \in \mathbb{R}^{n \times k} \times \mathbb{R} : t \det(X^\tp X) = 1 \} \cong \{ X \in \mathbb{R}^{n \times k} : \det(X^\tp X) \ne 0 \}
\]
since the map $(X,t) \mapsto X$ is invertible with inverse $X \mapsto (X,1/\det(X^\tp X))$. In other words, $\St(k,n)$ can be turned into a variety with an appropriate change of cooordinates. Note that everything we said about $\St(k,n)$ applies to the special case $\GL(n) =\St(n,n)$ too. Now observe that $\mathbb{S}^n_\pp $ is a connected component of $\GL(n) \cap \mathbb{S}^n$, which is isomorphic to a variety by the same argument.

It is straightforward to define polynomial functions for the submanifold models in Table~\ref{tab:sub}:
\begin{definition}\label{def:polysub}
Let $n,k \in \mathbb{N}$. A polynomial function $f : \mathbb{R}^{n \times k} \to \mathbb{R}$ is a function defined by a polynomial in $\mathbb{R}[x_{11}, x_{12},\dots,x_{nk}]$, i.e., a polynomial in the entries of the $n \times k$ matrix variable $X = (x_{ij})$. The degree of $f$ is simply its degree as a polynomial. A \emph{polynomial function} on any subset $\mathcal{M} \subseteq \mathbb{R}^{n \times k}$ is a polynomial function on $ \mathbb{R}^{n \times k}$ restricted to $\mathcal{M}$.
\end{definition}

Definition~\ref{def:polysub} applies to any subset $\mathcal{M} \subseteq \mathbb{R}^{n\times k}$ but if $\mathcal{M}$ is a variety,  then the set of all polynomial functions on $\mathcal{M}$ has a special structure \cite[p.~18]{Harris} and is called the \emph{coordinate ring} of $\mathcal{M}$, denoted $\mathbb{R}[\mathcal{M}]$. For $\mathcal{M} = \St(k,n)$ or $\GL(n) \cap \mathbb{S}^n$, which are only isomorphic to varieties but not themselves varieties, the coordinate ring $\mathbb{R}[\mathcal{M}]$ is the set of all \emph{regular functions}  \cite[p.~19]{Harris}  on $\mathcal{M}$, which include all polynomial functions but also some nonpolynomial functions. Since $\mathbb{S}^n_\pp  \subseteq \GL(n) \cap \mathbb{S}^n$,  a polynomial (resp.\ regular) function on $\mathbb{S}^n_\pp$ is simply a polynomial (resp.\ regular) function on $\GL(n) \cap \mathbb{S}^n$ restricted to $\mathbb{S}^n_\pp$. We may only  speak of the coordinate ring  $\mathbb{R}[\mathcal{M}]$ when $\mathcal{M}$ is a variety or is isomorphic to one, which is why $\mathbb{S}^n_\pp $ has to be treated separately.

The exact definitions of coordinate ring and regular function  \cite[pp.~18--19]{Harris} are unimportant for us. All the reader needs to know for the discussion below is that if $\mathcal{M}$ is any matrix submanifold appearing in the first column of Table~\ref{tab:quo}, then its coordinate ring $\mathbb{R}[\mathcal{M}]$ contains all polynomial functions on $\mathcal{M}$ but may also contain additional regular functions that are nonpolynomial.

We will define a polynomial function on any quotient manifold in Table~\ref{tab:quo} by reducing to Definition~\ref{def:polysub} by way of a well-known result in geometric invariant theory \cite[Amplification~1.3]{MFK94}: If all $G$-orbits are closed,  then the orbit space $\mathcal{M}/G$ is isomorphic to a variety and
\begin{equation}\label{eq:iso}
\mathbb{R}[\mathcal{M}/G] \simeq \mathbb{R}[\mathcal{M}]^G.
\end{equation}
Here  $\mathbb{R}[\mathcal{M}]^G$ denotes the subring of $G$-invariant regular functions in the coordinate ring $\mathbb{R}[\mathcal{M}]$.  The quotient manifolds in Table~\ref{tab:quo} are all orbit spaces of the form $\mathcal{M}/G $ with $\mathcal{M}$ a matrix submanifold listed in the first column and $G$ the corresponding matrix group listed in the second.  In all these cases,  we may verify that $G$-orbits in $\mathcal{M}$ are closed: This is obvious whenever $G$ is a compact group. The three cases involving noncompact $G$ require separate considerations. Take the  quotient model for the noncompact Stiefel manifold $\GL(n)/\P_1(k,n)$ for illustration: Here $\mathcal{M} = \GL(n)$, $G = \P_1(k,n)$, and the $\P_1(k,n)$-orbit of $\begin{bsmallmatrix}
X & Y \\
Z & W
\end{bsmallmatrix} \in \GL(n)$ is
\[
\biggl\lbrace
\begin{bmatrix}
X & Y' \\
Z & W'
\end{bmatrix} \in \GL(n): Y'\in \mathbb{R}^{k \times (n-k)},\; W'\in \mathbb{R}^{(n-k) \times (n-k)}
\biggr\rbrace,
\]
which is closed in $\GL(n)$. We may check that the same holds for $\mathcal{M} = \GL(n)$, $G = \P(k,n)$, and for  $\mathcal{M} = \St(k,n)$, $G = \GL(k)$. By \eqref{eq:iso},  an element of  $\mathbb{R}[\mathcal{M}/G]$, i.e., a regular function $f : \mathcal{M}/G \to \mathbb{R}$, corresponds uniquely to an element of $\mathbb{R}[\mathcal{M}]^G$, i.e., a $G$-invariant regular function on $\mathcal{M}$. With this, we shall define polynomial functions on $\mathcal{M}/G$ to be exactly those  elements of $\mathbb{R}[\mathcal{M}/G]$ that correspond to polynomial functions in $\mathbb{R}[\mathcal{M}]^G$.
\begin{definition}\label{def:polyquo}
Let $\mathcal{M}/G$ be any quotient manifold appearing in Table~\ref{tab:quo}.  A \emph{polynomial function} $f : \mathcal{M}/G  \to \mathbb{R}$ is a function defined by a $G$-invariant polynomial function on $\mathcal{M}$.
\end{definition}
In case the reader is wondering, the last row of Table~\ref{tab:quo} says that $\GL(n)/\O(n)$ is a quotient model for $\El(\mathbb{R}^n)$ and the first row of Table~\ref{tab:sub} says that $\mathbb{S}^n_\pp$ is a submanifold model for $\El(\mathbb{R}^n)$. The consequence is that $\GL(n)/\O(n)$ and $\mathbb{S}^n_\pp$ are diffeomorphic as manifolds. There is no contradiction with what the above discussions imply: $\GL(n)/\O(n)$ is isomorphic to a variety but $\mathbb{S}^n_\pp$ is not, as isomorphism of varieties is entirely different from diffeomorphism of manifolds.

Just so that the reader sees why we need to rely on \eqref{eq:iso} to define a polynomial: Take $\mathcal{M} = \GL(n)$ and  $G = \P_1(k,n)$ again. We cannot directly say what is a polynomial function $f : \GL(n)/\P_1(k) \to \mathbb{R}$ since a point in $\GL(n)/\P_1(k)$ is an equivalence class with no preferred representative, i.e., we cannot define $f$ by writing down a polynomial expression in the coordinates of the point.  However, with
\[
\mathbb{R}[\GL(n)/\P_1(k,n)] \simeq \mathbb{R}[\GL(n)]^{\P_1(k,n)},
\]
we know that $f$ corresponds uniquely to a regular function $\bar{f} : \GL(n) \to \mathbb{R}$. So if
\begin{enumerate}[\upshape (i)]
\item $\bar{f}$ is a polynomial function on $\GL(n)$ according to Definition~\ref{def:polysub};
\item $\bar{f}$ is $\P_1(k,n)$-invariant, i.e., $f(XP) = f(X)$ for all $X \in \GL(n)$ and $P \in \P_1(k,n)$;
\end{enumerate}
then we say that $f$ is a polynomial function on $\GL(n)/\P_1(k)$.

Unlike Definition~\ref{def:polysub}, we did not include a notion of degree in Definition~\ref{def:polyquo} as a rigorous definition is beyond the scope of this article.  To see why it can be tricky to define ``degree'' over a quotient model, take $\O(3)/(\O(2)\times \O(1))$ for illustration. We have
\begin{equation}\label{eq:cs}
\left\lb \begin{bmatrix}
c & 0 & -s \\
0 & 1 & 0 \\
s & 0 & c
\end{bmatrix} \right\rb = \left\lb
\begin{bmatrix}
c^3 - cs^2 & -2c^2s & -s \\
2cs & c^2 - s^2 & 0 \\
c^2s - s^3 & -2cs^2 & c
\end{bmatrix}\right\rb
\end{equation}
where $\lb \, \cdot \, \rb$ denotes $\O(2)\times \O(1)$-cosets in $\O(3)$. In fact the matrix representative on the right can be chosen to have arbitrary high powers in $c$ and $s$, noting that it is really just the $d =2$ case of
\[
\begin{bmatrix}
c & 0 & -s \\
0 & 1 & 0 \\
s & 0 & c
\end{bmatrix}
\begin{bmatrix}
c & -s & 0 \\
s & c & 0 \\
0 & 0 & 1
\end{bmatrix}^d.
\]
It is easy to declare that we can always just pick a lowest-degree representative, but as the matrix representive on the right of \eqref{eq:cs} shows, even in an example as simple as this one, it is rarely clear that the given representative can be further simplified.
Of course with a more careful treatment we may circumvent this issue and still define degree rigorously for a quotient model. However we do not wish to distract the reader with such tangential matters.
The small price we pay is that we may only speak of ``polynomial optimization'' but not ``linear, quadratic, cubic, or quartic'' programming when discussing quotient models.

\subsection{Complexity theoretic assumptions}\label{sec:cplx}

Just as we need a model for an abstract manifold to discuss manifold optimization, we need a model of complexity to discuss NP-hardness --- Cook--Karp--Levin, Blum--Shub--Smale, Valiant, or yet something else. We will use  that of Cook--Karp--Levin, more commonly known as the Turing \emph{bit model} in optimization and widely adopted by optimization theorists; see \cite[Section~1.1]{GLS} and \cite[Chapter~2]{vava}. But on occasion we will instead use the \emph{oracle model} \cite[Section~1.2]{GLS}, also called black-box model in \cite[Chapter~6]{vava}. This is a dichotomy that we will need to highlight:
\begin{enumerate}[\upshape (a)]
\item\label{it:poly} In polynomial optimization, the input consists of the polynomial's coefficients, assumed rational, and thus bit complexity is well-defined.

\item\label{it:cvx} In convex optimization, the input is a convex function; and in general there is no way to quantify how many bits it takes to describe an arbitrary  convex function. The standard approach around this conundrum is to use oracle complexity  \cite{NY1983, nesterov2018}.
\end{enumerate}
Unless specified otherwise all of our NP-hardness results will be in terms of the more precise bit complexity \eqref{it:poly}. In a handful of less important corollaries when a bit complexity version is out of reach we resort to the oracle complexity model \eqref{it:cvx} --- these will be clearly specified.

We will also reproduce \cite[Definition~2.5]{de2008} for FPTAS below for easy reference.
\begin{definition}[Fully polynomial-time approximation scheme]\label{def:FPTAS}
With respect to the maximization problem over $\mathcal{M}$ and the function class $\mathscr{F}$, an algorithm $\mathscr{A}$ is called a fully polynomial-time approximation scheme or $\mathrm{FPTAS}$ if:
\begin{enumerate}[\upshape (i)]
\item For any instance $f \in \mathscr{F}$ and any $\varepsilon>0$, $\mathscr{A}$ takes the defining parameters of $f$ (e.g., coefficients of $f$ when $f$ is a polynomial), $\varepsilon$, and $\mathcal{M}$ as input and computes an $x_\varepsilon \in \mathcal{M}$ such that $f(x_\varepsilon)$ is a $(1-\varepsilon)$-approximation of $f$, i.e.,
\[
f_{\max} - f(x_\varepsilon)  \le  \varepsilon(f_{\max} - f_{\min}).
\]

\item The number of operations required for the computation of $x_\varepsilon$ is bounded by a polynomial in the problem size, and $1/\varepsilon$.
\end{enumerate}
If $\mathscr{A}$ is an $\mathrm{FPTAS}$ that relies on an oracle of $f$ for the value of $f$ at some point, then we call it an oracle-$\mathrm{FPTAS}$.
\end{definition}

\section{NP-hardness of QP over $\Gr(k,n)$ without fixing $k$ and $n$}\label{sec:both}

In this and the next two sections, we will use the projection model  $\Gr_\pi(k, n)$ in Table~\ref{tab:sub} to model our Grassmannian. For notational simplicity we drop the subscript $\pi$ and just write
\begin{equation}\label{eq:iden}
\Gr(k, n) = \{P \in \mathbb{R}^{n \times n}: P^2 = P = P^\tp,\; \tr(P) = k\}.
\end{equation}
Observe that $\tr(P) =\rank(P)$ for an orthogonal projection matrix $P$.
Given that the Grassmannian parameterizes $k$-planes in $n$-space, one might expect to find a relation with the combinatorial problem of choosing a $k$-element set from an $n$-element set. This intuition pans out --- there is a natural correspondence between the two.

Let $G = (V, E)$ be an $n$-vertex undirected graph with $V = \{1,\dots,n\}$. We denote an edge as an ordered pair $(i,j)$. For an undirected graph, this means that if $(i,j) \in E$, then $(j,i) \in E$, and so a sum over $E$ sums each edge twice. We do not allow self loop so if $(i,j) \in E$, then $i \ne j$.

The clique decision problem asks if a clique of size $k$ exists in $G$. The problem is one of Karp's original 21 NP-complete problems \cite{Karp}.  Let $e_1, \dots, e_n  \in \mathbb{R}^n$ be its standard basis. Consider the following maximization problem
\begin{equation}\label{eq:func}
\max_{P\in\Gr(k, n)} f(P) \coloneqq  \max_{P\in\Gr(k, n)}  \biggl[  \sum_{(i, j) \in E}e_i^\tp P e_i e_j^\tp P e_j + \sum_{i \in V} e_i^\tp P e_i e_i^\tp P e_i \biggr].
\end{equation}
As a manifold optimization problem over $\Gr(k,n)$, the problem is unconstrained, and clearly quadratic, in fact homogeneous of degree two as $f(c P) = c^2 f(P)$ for any $c \in \mathbb{R}$.

\begin{proposition}[Clique decision as Grassmannian QP]\label{prop:cli}
Any $n$-vertex graph $G$ contains a $k$-clique if and only if $\max_{P\in\Gr(k, n)} f(P) = k^2$.
\end{proposition}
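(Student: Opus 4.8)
The plan is to replace the matrix optimization by a question about the diagonal of $P$ alone. Writing $p_i \coloneqq e_i^\tp P e_i$ for the $i$-th diagonal entry of a point $P \in \Gr(k,n)$, the objective in \eqref{eq:func} becomes
\[
f(P) \;=\; \sum_{(i,j)\in E} p_i p_j + \sum_{i\in V} p_i^2 .
\]
First I would record the two elementary facts about $p = \diag(P)$: since $P = P^\tp P$ is positive semidefinite, $p_i = \lVert P e_i\rVert^2 \ge 0$; since $I - P$ is also an orthogonal projection, $1 - p_i = e_i^\tp (I-P) e_i \ge 0$; and $\sum_{i} p_i = \tr(P) = k$. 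Hence $p$ lies in the polytope $\{ q \in [0,1]^n : \sum_i q_i = k\}$. Only this containment is needed, together with one explicit construction below; the full Schur--Horn description of the diagonals of rank-$k$ projections is not required.

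Next I would prove the uniform upper bound $f(P) \le k^2$, valid for \emph{every} $P \in \Gr(k,n)$ irrespective of $G$. Since every $p_i p_j \ge 0$, and $E$ regarded as a set of ordered pairs consists of pairs of distinct vertices and hence sits inside the set of all such ordered pairs,
\[
f(P) \;\le\; \sum_{i\ne j} p_i p_j + \sum_{i} p_i^2 \;=\; \Bigl(\sum_i p_i\Bigr)^2 \;=\; k^2 .
\]
The useful refinement is the exact equality gap $k^2 - f(P) = \sum_{\{i,j\}\notin E,\ i\ne j} p_i p_j \ge 0$, so $f(P) = k^2$ if and only if $p_i p_j = 0$ for every non-edge $\{i,j\}$, i.e.\ the set $T \coloneqq \{ i : p_i > 0\}$ induces a clique in $G$. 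Since $k = \sum_{i\in T} p_i \le |T|$, we get $|T| \ge k$; as every $k$-subset of a clique is a clique, a point $P$ with $f(P) = k^2$ produces a $k$-clique of $G$. Finally $\Gr(k,n)$ is compact — it is cut out by polynomial equations and $\lVert P\rVert_F^2 = \tr(P^2) = \tr(P) = k$ — and $f$ is continuous, so the maximum is attained; thus ``$\max_{P} f(P) = k^2$'' is equivalent to the existence of such a $P$, hence to $G$ containing a $k$-clique. That settles the ``only if'' direction.

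For the ``if'' direction, suppose $G$ has a $k$-clique on a vertex set $S$ with $|S| = k$. I would exhibit the diagonal projector $P = \sum_{i\in S} e_i e_i^\tp$, which lies in $\Gr(k,n)$ (it is symmetric, idempotent, of trace $k$) and has $p_i = 1$ for $i \in S$ and $p_i = 0$ otherwise. Then $\sum_i p_i^2 = k$, while $\sum_{(i,j)\in E} p_i p_j$ counts the ordered pairs of distinct vertices inside $S$, namely $k(k-1)$, because $S$ is a clique and $E$ is symmetric; hence $f(P) = k(k-1) + k = k^2$, and with the upper bound this gives $\max_P f(P) = k^2$. I do not expect a genuine obstacle: the argument is short once one passes to the diagonal. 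The one place needing a little care is the equality analysis — the clique extracted from $T$ may have more than $k$ vertices, which is harmless since sub-cliques of cliques are cliques — together with the automatic containment $\diag(P)\in[0,1]^n$, which is exactly where positive semidefiniteness of both $P$ and $I-P$ is used.
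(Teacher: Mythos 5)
Your proof is correct, and the upper bound $f(P) \le k^2$ together with the diagonal-projector construction for the ``if'' direction coincide with the paper's argument. Where you genuinely diverge is the ``only if'' direction: you characterize the equality case exactly --- $k^2 - f(P) = \sum_{(i,j)\notin E} p_i p_j$, so $f(P)=k^2$ forces the support $T=\{i: p_i>0\}$ to be a clique with $|T|\ge k$ --- and then invoke compactness of $\Gr(k,n)$ to convert attainment of the supremum into existence of such a $P$. The paper instead proves a \emph{quantitative} contrapositive: sorting the diagonal entries so that $p_{11}\ge\dots\ge p_{nn}$, it bounds $p_{kk}$ from below by roughly $1/(n-k+1)$ and concludes that if $G$ has no $k$-clique then $f(P)\le k^2 - 1/(n-k-1)^2$ for every $P$. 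Your route is cleaner and more elementary for the proposition as literally stated (and nicely avoids any appeal to Schur--Horn, which the paper also does not need here), but it buys only the dichotomy ``$=k^2$ versus $<k^2$,'' whereas the paper's explicit gap of order $1/(n-k)^2$ is exactly what the subsequent theorem uses to rule out an $\mathrm{FPTAS}$ with $\varepsilon$ polynomially small in $n$ and $k$. If you intend your proposition to feed into that hardness argument, you would need to supplement your equality analysis with a lower bound on $\sum_{(i,j)\notin E} p_i p_j$ in the no-clique case, which is precisely the step your version omits.
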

\begin{proof}
Any projection matrix $P = (p_{ij})  \in \Gr(k,n)$ corresponds to the unique $k$-dimensional subspace $\im(P) \subseteq \mathbb{R}^n$. Observe that $p_{ii} = e_i^\tp P e_i = 1$ if and only if $e_i \in \im(P)$. 
If $G$ contains a $k$-clique, we may choose $P$ to be the diagonal matrix $D$ with $d_{ii} = 1$ when $i$ is a vertex in the $k$-clique and zero everywhere else, then $f(D) = k^2$. As $P \in \Gr(k,n)$ is automatically positive semidefinite, we always have $p_{ii} \ge 0$ for all $ i=1,\dots, n$. This implies
\[
f(P) =  \sum_{(i, j) \in E} p_{ii}p_{jj} + \sum_{i \in V} p_{ii}^2 \le  \sum_{i \ne j} p_{ii}p_{jj} + \sum_{i=1}^n p_{ii}^2 = \tr(P)^2 = k^2.
\]
So when there is a $k$-clique, we have $\max_{P\in\Gr(k, n)} f(P) = k^2$. For any $P\in\Gr(k, n)$, we may also write
\[
f(P) =  \sum_{i \ne j} p_{ii}p_{jj} + \sum_{i \in V} p_{ii}^2 -  \sum_{(i, j)\notin E}p_{ii}p_{jj} = k^2 - \sum_{(i, j)\notin E}p_{ii}p_{jj}.
\]
The diagonal entries satisfy $p_{11} + \dots + p_{nn} = \tr(P) = k$ and $0 \le p_{11},\dots,p_{nn} \le 1$. We may assume that $1 \ge  p_{11} \ge  p_{22} \ge  \dots \ge  p_{nn} \ge  0$. Then
\[
k = \sum_{i = 1}^n p_{ii} \le (k-1) + (n-k+1)p_{kk},
\]
so $p_{kk} \ge  1/(n-k-1)$. If $G$ contains no $k$-clique, then in particular the vertices $1, \dots, k$ do not form a clique. So $(i_0, j_0) \notin E$ for some $i_0,j_0 \in \{1,\dots, k \}$. Hence $p_{i_0 i_0},  p_{j_0 j_0} \ge p_{kk}$,
\[
\sum_{(i, j)\notin E}p_{ii}p_{jj} \ge  p_{kk}^2 \ge \frac{1}{(n-k-1)^2},
\]
and
\[
f(P) \le k^2 - \frac{1}{(n-k-1)^2},
\]
as required.
\end{proof}

Applying Definition~\ref{def:FPTAS} to the current scenario, the input data for $\mathcal{M} = \Gr(k,n)$ is just $k$ and $n$; and the input data for an instance $f \in \mathscr{F}$, the function class in \eqref{eq:func}, is just a graph $G = (V,E)$.  We may deduce the following hardness result from Proposition~\ref{prop:cli}.
\begin{theorem}[Grassmannian quadratic programming is NP-hard I]
Unless $\mathrm{P} = \mathrm{NP}$, there is no $\mathrm{FPTAS}$ that is polynomial in $n$ and $k$ for maximizing quadratic polynomials over $\Gr(k, n)$.
\end{theorem}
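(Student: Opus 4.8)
The plan is to convert the separation in Proposition~\ref{prop:cli} into a hardness-of-approximation statement by a standard gap argument. Suppose for contradiction that $\mathscr{A}$ is an $\mathrm{FPTAS}$ for maximizing quadratic polynomials over $\Gr(k,n)$ whose running time is bounded by a polynomial in $n$, $k$, and $1/\varepsilon$. Given an $n$-vertex graph $G$ and a target size $k$, form the objective $f$ of \eqref{eq:func}; its coefficients are all $0$ or $1$, so the instance has bit-length $O(n^2)$. We may assume $k \le n-2$, since the cases $k \in \{n-1, n\}$ reduce to checking whether $G$ is complete or nearly complete, which is trivial; thus $n - k - 1 \ge 1$.

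As shown in the proof of Proposition~\ref{prop:cli}, we have the quantitative dichotomy
\[
\max_{P \in \Gr(k,n)} f(P) = k^2 \ \text{ if } G \text{ has a } k\text{-clique}, \qquad \max_{P \in \Gr(k,n)} f(P) \le k^2 - \frac{1}{(n-k-1)^2} \ \text{ otherwise.}
\]
Moreover, since every $P \in \Gr(k,n)$ is positive semidefinite we have $f(P) = \sum_{(i,j)\in E} p_{ii}p_{jj} + \sum_i p_{ii}^2 \ge 0$, while $f(P) \le k^2$ always; hence $f_{\max} - f_{\min} \le k^2$. Run $\mathscr{A}$ with accuracy
\[
\varepsilon \coloneqq \frac{1}{2k^2(n-k-1)^2},
\]
so that $1/\varepsilon = 2k^2(n-k-1)^2$ is polynomial in $n$ and $k$; $\mathscr{A}$ then returns, in time polynomial in $n$ and $k$, a point $P_\varepsilon \in \Gr(k,n)$ with $f_{\max} - f(P_\varepsilon) \le \varepsilon(f_{\max}-f_{\min}) \le \varepsilon k^2 = \frac{1}{2(n-k-1)^2}$.

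Now compute the rational number $f(P_\varepsilon)$ and compare it with the threshold $k^2 - \frac{1}{(n-k-1)^2}$. If $G$ has a $k$-clique then $f_{\max} = k^2$, so $f(P_\varepsilon) \ge k^2 - \frac{1}{2(n-k-1)^2} > k^2 - \frac{1}{(n-k-1)^2}$; if $G$ has no $k$-clique then $f(P_\varepsilon) \le f_{\max} \le k^2 - \frac{1}{(n-k-1)^2}$. The two cases are disjoint, so the comparison decides the clique problem, and every step — building $f$, calling $\mathscr{A}$, evaluating and comparing $f(P_\varepsilon)$ — runs in time polynomial in the input size, forcing $\mathrm{P} = \mathrm{NP}$ by the NP-completeness of clique decision \cite{Karp}. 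The argument is essentially bookkeeping once Proposition~\ref{prop:cli} is available; the only point that matters is that the "yes/no" gap $1/(n-k-1)^2$ is merely inverse-polynomial and $f_{\max}-f_{\min}$ is polynomially bounded (by $k^2$), so that accuracy $\varepsilon = \Theta\bigl(1/(k^2(n-k-1)^2)\bigr)$, still with $1/\varepsilon$ polynomial, suffices to resolve it — which is exactly why even an approximation scheme is ruled out. Note also that $P_\varepsilon$ lies on $\Gr(k,n)$, so $f(P_\varepsilon) \le f_{\max}$ holds unconditionally and the "no" side of the test uses no approximation guarantee at all.
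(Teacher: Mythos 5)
Your proof is correct and follows essentially the same gap argument as the paper: the same choice $\varepsilon = \frac{1}{2k^2(n-k-1)^2}$, the same use of the dichotomy from Proposition~\ref{prop:cli}, and the same comparison against the threshold $k^2 - \frac{1}{(n-k-1)^2}$. The only additions are bookkeeping the paper leaves implicit (the bound $f_{\max}-f_{\min}\le k^2$ via positive semidefiniteness, and the trivial edge cases $k\in\{n-1,n\}$), which do not change the argument.
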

\begin{proof}
If there is a $k$-clique in $G$, then $f_{\max} = k^2$ by Proposition~\ref{prop:cli}. Take
\[
\varepsilon = \frac{1}{2k^2(n-k-1)^2},
\]
so that $1/\varepsilon$ is polynomial in $k, n$. Then an $\mathrm{FPTAS}$ gives a $P_\varepsilon \in \Gr(k,n)$ such that
\[
f(P_\varepsilon) \ge  f_{\max} - \varepsilon(f_{\max} - f_{\min}) \ge  k^2 - \frac{1}{2k^2(n-k-1)^2}k^2 = k^2 - \frac{1}{2(n-k-1)^2}.
\]
If there is no $k$-clique in $G$, then an $\mathrm{FPTAS}$ algorithm gives a $P_\varepsilon \in \Gr(k,n)$ such that
\[
f(P_\varepsilon) \le  f_{\max} \le  k^2 - \frac{1}{(n-k-1)^2}.
\]
In other words, we can decide existence or non-existence of a $k$-clique in $G$ in polynomial time if there were an $\mathrm{FPTAS}$ for the maximization problem.
\end{proof}

\section{NP-hardness of QP over $\Gr(k,n)$ with fixed $k$}\label{sec:onlyn}

We remind the reader that NP-hardness is a notion of \emph{asymptotic} time complexity. In the last section, we assume that $k$ and $n$ both grow to infinity. In this section we will prove a stronger variant allowing $k \in \mathbb{N}$ to be arbitrary but fixed and only $n$ grows to infinity.

We let $G = (V, E)$ be an $n$-vertex undirected graph following conventions set in the last section. For any $n, k \in \mathbb{N}$ with  $k \le n$, we define the set
\[
\Delta_{k, n} \coloneqq \{x \in \mathbb{R}^n : 0\le  x_i \le  1, \; i=1, \dots, n,\; x_1 + \dots + x_n = k\},
\]
and the function $f : \mathbb{R}^n \to \mathbb{R}$,
\[
f(x) \coloneqq  \sum_{(i, j) \in E} x_i x_j.
\]

Recall that $S\subseteq V$ is a clique if $(i, j) \in E$ for all $i, j \in S$. We will show that the \emph{clique number}
\[
\omega(G) \coloneqq \max\{ \lvert S \rvert : S \subseteq V \text{ is a clique}\}
\]
of $G$ may be determined by maximizing $f$ over $\Delta_{k,n}$. Indeed, for the special case $k =1$,
\[
\Delta_{1, n} = \{x \in \mathbb{R}^n :  x_1 + \dots + x_n = 1, \; x_i \ge 0, \; i=1, \dots, n\}
\]
is just the unit simplex and we have the celebrated result of Motzkin and Straus \cite{motzkin1965} that
\begin{equation}\label{eq:MS}
\max_{x \in \Delta_{1, n}}  f(x) = 1-\frac{1}{\omega(G)}.
\end{equation}
Note that because of our convention of summing over each undirected edge twice, our expression is slightly neater, lacking the factor $1/2$ on the right found in \cite{motzkin1965}.
We will extend this to $\Delta_{k,n}$.
\begin{proposition}[Generalized Motzkin--Straus]\label{prop:MS}
Let $k \le n$ be positive integers and $G$ be an $n$-vertex undirected graph. If $\omega(G) \ge  k$, then
\[
\max_{x \in \Delta_{k, n}} f(x) = k^2\biggl(1-\frac{1}{\omega(G)}\biggr).
\]
\end{proposition}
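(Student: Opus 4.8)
The plan is to prove the two inequalities $\max_{x\in\Delta_{k,n}} f(x) \le k^2\bigl(1-\tfrac{1}{\omega(G)}\bigr)$ and $\max_{x\in\Delta_{k,n}} f(x) \ge k^2\bigl(1-\tfrac{1}{\omega(G)}\bigr)$ separately, writing $\omega \coloneqq \omega(G)$ throughout. Since $\Delta_{k,n}$ is compact and $f$ continuous, the maximum is attained, so combining the two inequalities gives the claimed equality.

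For the upper bound I would reduce to the classical Motzkin--Straus identity \eqref{eq:MS}. Given any $x\in\Delta_{k,n}$, set $y \coloneqq x/k$. Then $y_i \ge 0$ for all $i$ and $y_1+\dots+y_n = 1$, so $y\in\Delta_{1,n}$, the unit simplex, and \eqref{eq:MS} yields $f(y)\le 1 - 1/\omega$. Since $f$ is homogeneous of degree two, $f(x) = f(ky) = k^2 f(y) \le k^2(1-1/\omega)$. Note that this half does not use the hypothesis $\omega \ge k$; it in fact holds on the larger set $\{x\ge 0 : x_1+\dots+x_n = k\}$.

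For the lower bound I would exhibit a single feasible point achieving the value. Fix a maximum clique $S\subseteq V$, so $\lvert S\rvert = \omega$, and define $x\in\mathbb{R}^n$ by $x_i = k/\omega$ for $i\in S$ and $x_i = 0$ otherwise. Here the hypothesis $\omega \ge k$ enters, and is used precisely to guarantee $0 \le k/\omega \le 1$, so that $x$ satisfies the box constraints; moreover $\sum_i x_i = \omega\cdot(k/\omega) = k$, hence $x\in\Delta_{k,n}$. Only edges with both endpoints in $S$ contribute to $f(x)$, and since $S$ is a clique there are $\omega(\omega-1)$ such ordered pairs, each contributing $(k/\omega)^2$; therefore $f(x) = \omega(\omega-1)(k/\omega)^2 = k^2(1-1/\omega)$, as needed.

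I expect no genuine obstacle here: the argument is a rescaling plus a one-line construction. The only point requiring care is tracking the box constraint $x_i\le 1$ in the lower bound — it is satisfied exactly because $\omega(G)\ge k$ — together with the observation that dropping this hypothesis does not break the upper bound but does prevent it from being tight (if $\omega(G)<k$ the uniform-on-a-clique point overflows the box, and the true maximum over $\Delta_{k,n}$ is then strictly smaller than $k^2(1-1/\omega)$).
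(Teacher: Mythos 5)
Your proof is correct and follows essentially the same route as the paper's: the lower bound via the point uniform on a maximum clique (with $\omega(G)\ge k$ ensuring the box constraint $x_i\le 1$), and the upper bound via the rescaling $y=x/k$, homogeneity, and the classical Motzkin--Straus identity \eqref{eq:MS}. Your explicit remark on where the hypothesis $\omega(G)\ge k$ is used is a nice touch that the paper leaves implicit.
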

\begin{proof} 
Suppose $S = \{1, \dots, \omega(G)\} \subseteq V$ is a largest clique. Then setting $x_* \in \Delta_{k, n}$ with coordinates
\[
x_1^* = \dots = x_{\omega(G)}^* = \frac{k}{\omega(G)}, \quad x_{\omega(G) + 1}^* = \dots = x_n^* = 0
\]
gives us the value
\[
f(x_*) = k^2\biggl(1-\frac{1}{\omega(G)}\biggr).
\]
So we only need to show that
\[
\max_{x \in \Delta_{k, n}} f(x) \le  k^2\biggl(1-\frac{1}{\omega(G)}\biggr).
\]
By a change-of-variable $x = ky$, the maximization problem becomes
\[
\max_{y \in \Theta_{k,n}} k^2f(y).
\]
where
\[
\Theta_{k,n} \coloneqq \biggl\{y \in \mathbb{R}^n :  y_1 + \dots + y_n = 1, \; 0 \le  y_i \le  \frac{1}{k}, \; i=1, \dots, n\biggr\} \subseteq \Delta_{1, n}.
\]
So by the original Motzkin--Straus Theorem \eqref{eq:MS},
\[
\max_{x \in \Delta_{k, n}} f(x) = k^2 \max_{y \in \Theta_{k,n}} f(y) \le  k^2 \max_{y \in \Delta_{1, n}} f(y) \le  k^2\biggl(1-\frac{1}{\omega(G)}\biggr),
\]
as required.
\end{proof}
While we will not need this, but the statement in Proposition~\ref{prop:MS} is really ``if and only if.'' We may show that if $\omega(G) < k$, then
\[
\max_{x \in \Delta_{k, n}} f(x) < k^2\biggl(1-\frac{1}{\omega(G)}\biggr).
\]
In this sense the result in Proposition~\ref{prop:MS} is sharp.

\begin{proposition}[Clique number as Grassmannian QP]\label{prop:cli2}
Let $k \le n$ be positive integers and $G$ be an $n$-vertex undirected graph. 
If $\omega(G) \ge  k$, then
\[
\max_{P \in \Gr(k, n)} \sum_{(i, j) \in E} p_{ii} p_{jj} = k^2\biggl(1-\frac{1}{\omega(G)}\biggr).
\]
\end{proposition}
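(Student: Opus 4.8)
\emph{Proof plan.} The plan is to recognize the objective as exactly the generalized Motzkin--Straus quadratic of Proposition~\ref{prop:MS} evaluated on the diagonal of $P$: writing $f(x) = \sum_{(i,j)\in E} x_i x_j$ as in that proposition, we have $\sum_{(i,j)\in E} p_{ii}p_{jj} = f(\diag(P))$. So everything reduces to understanding the set $\{\diag(P) : P \in \Gr(k,n)\}$ of diagonals of rank-$k$ orthogonal projection matrices, and the conceptual heart of the argument is the (classical) identity
\[
\{\diag(P) : P \in \Gr(k,n)\} = \Delta_{k,n}.
\]
Only the easy inclusion ``$\subseteq$'' is needed for the upper bound, and only one well-chosen point of the ``$\supseteq$'' is needed for the lower bound.

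For the \emph{upper bound}: if $P \in \Gr(k,n)$ then $P$ is an orthogonal projection, hence $0 \preceq P \preceq I$ in the Loewner order, so $p_{ii} = e_i^\tp P e_i \in [0,1]$ for each $i$, while $p_{11}+\dots+p_{nn} = \tr(P) = k$; thus $\diag(P) \in \Delta_{k,n}$. Proposition~\ref{prop:MS} then gives, for every $P \in \Gr(k,n)$,
\[
\sum_{(i,j)\in E} p_{ii}p_{jj} = f(\diag(P)) \le \max_{x \in \Delta_{k,n}} f(x) = k^2\Bigl(1 - \tfrac{1}{\omega(G)}\Bigr).
\]

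For the \emph{lower bound}: I would exhibit a single $P_* \in \Gr(k,n)$ whose diagonal is the maximizer $x_*$ from the proof of Proposition~\ref{prop:MS}, namely the vector taking the value $k/\omega(G)$ on the vertex set $S$ of a largest clique and $0$ elsewhere. Since $(p_*)_{ii}=0$ together with $P_* \succeq 0$ forces the $i$-th row and column of $P_*$ to vanish for every $i \notin S$, producing $P_*$ amounts to producing an $\omega(G)\times\omega(G)$ orthogonal projection of rank $k$ with constant diagonal $k/\omega(G)$ and then padding it with zero rows and columns (the result is symmetric, idempotent, of trace $k$, hence in $\Gr(k,n)$). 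Such an $\omega(G)\times\omega(G)$ projection exists: the Schur--Horn theorem furnishes a real symmetric matrix with eigenvalues $(1,\dots,1,0,\dots,0)$ ($k$ ones) and diagonal $(k/\omega(G),\dots,k/\omega(G))$, since the latter vector is majorized by the former --- a one-line check using $k \le \omega(G)$. (Concretely, for suitable parities of $k$ and $\omega(G)$ one can simply take a real circulant spectral projection of the cyclic shift, which is automatically of constant diagonal.) This yields $f(\diag(P_*)) = f(x_*) = k^2(1 - 1/\omega(G))$, matching the upper bound.

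The only genuinely nontrivial ingredient is the existence of a rank-$k$ orthogonal projection with a prescribed constant diagonal --- i.e.\ the surjectivity direction of $\{\diag(P):P\in\Gr(k,n)\}=\Delta_{k,n}$. This is standard (Schur--Horn, or an explicit Givens-rotation/circulant construction), whereas the upper bound and the reduction to Proposition~\ref{prop:MS} are routine; I expect the write-up to spend essentially all its effort there.
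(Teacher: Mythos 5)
Your proposal is correct and follows essentially the same route as the paper: the paper also invokes the Schur--Horn theorem to identify $\{\diag(P):P\in\Gr(k,n)\}$ with $\Delta_{k,n}$ and then applies Proposition~\ref{prop:MS}. The only difference is presentational --- you split the argument into the easy inclusion for the upper bound plus an explicit realization of the maximizer, whereas the paper cites surjectivity of the diagonal map wholesale --- but the mathematical content is identical.
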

\begin{proof}
Here $\Gr(k,n)$ is as in \eqref{eq:iden}. By the Schur--Horn Theorem \cite{horn1954}, the diagonal map $\diag : \mathbb{R}^{n \times n} \to \mathbb{R}^n$, when restricted to $\Gr(k,n)$, gives a surjection
\[
\diag : \Gr(k,n) \to  \Delta_{k,n}, \quad P \mapsto \diag(P).
\]
In other words, with a relabeling of variables $x_i = p_{ii}$, $i=1,\dots,n$, we get
\[
\max_{P \in \Gr(k, n)} \sum_{(i, j) \in E} p_{ii} p_{jj}  = \max_{x \in \Delta_{k, n}}  \sum_{(i, j) \in E} x_i x_j.
\]
So the required result follows from Proposition~\ref{prop:MS}. As in the previous section, the objective function may be written $\sum_{(i, j) \in E}  e_i^\tp P e_i e_j^\tp P e_j$, which is clearly quadratic in $P$.
\end{proof}

Note that our next proof will rely on the fact that for any fixed $k$, the time complexity of the $k$-clique problem  is polynomial in $n$. Even an exhaustive search through all subgraphs with $k$ or fewer vertices has time $\binom{n}{1} + \binom{n}{2} + \dots + \binom{n}{k}$, a degree-$k$ polynomial in $n$.
\begin{theorem}[Grassmannian quadratic programming is NP-hard II]\label{thm:NP2}
Let $k \in \mathbb{N}$ be fixed. Unless $\mathrm{P} = \mathrm{NP}$, there is no $\mathrm{FPTAS}$ that is polynomial in $n$ for maximizing quadratic polynomials over $\Gr(k, n)$.
\end{theorem}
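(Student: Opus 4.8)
The plan is to reduce the \emph{maximum clique problem} --- computing $\omega(G)$ for an arbitrary $n$-vertex graph $G$ --- to approximate maximization of the quadratic objective $f(P) = \sum_{(i,j)\in E} p_{ii}p_{jj}$ over $\Gr(k,n)$, for our \emph{fixed} $k$. Computing $\omega(G)$ is NP-hard since it immediately answers the clique decision problem, one of Karp's NP-complete problems \cite{Karp}. The point that makes this section different from Section~\ref{sec:both} is that we cannot reduce from $k$-clique decision (that is polynomial in $n$ for fixed $k$); instead we must extract $\omega(G)$ from the \emph{optimal value} $f_{\max} = k^2\bigl(1 - 1/\omega(G)\bigr)$ supplied by Proposition~\ref{prop:cli2}, which holds provided $\omega(G)\ge k$.

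First I would dispose of the easy regime. For fixed $k$, deciding whether $\omega(G)\ge k$, and if not computing $\omega(G)$ exactly, both take time $O(n^k)$ by enumerating all vertex subsets of size at most $k$ --- polynomial in $n$. So it suffices to treat graphs with $\omega(G)\ge k$, to which Proposition~\ref{prop:cli2} applies. (Equivalently, one may preprocess an arbitrary graph $H$ into $G$, the join of $H$ with a $k$-clique, for which $\omega(G) = \omega(H)+k\ge k$ automatically; either route works.)

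Next comes the gap analysis, which is the technical heart. When $\omega(G)\ge k$, the optimal value $f_{\max}$ lies in the finite set $\{v_\omega := k^2(1-1/\omega) : \omega = k,\dots,n\}$, and consecutive values satisfy $v_{\omega+1}-v_\omega = k^2/\bigl(\omega(\omega+1)\bigr)$, which is smallest at $\omega = n-1$, giving a separation of at least $\delta := k^2/\bigl(n(n-1)\bigr)$. Since every $P\in\Gr(k,n)$ is positive semidefinite we have $0\le p_{ii}\le 1$, hence $0\le f(P)\le k^2$ and $f_{\max}-f_{\min}\le k^2$. Taking $\varepsilon := 1/(2n^2)$, so that $1/\varepsilon$ is polynomial in $n$, a putative FPTAS returns $P_\varepsilon\in\Gr(k,n)$ with
\[
f_{\max} - f(P_\varepsilon) \;\le\; \varepsilon(f_{\max}-f_{\min}) \;\le\; \varepsilon k^2 \;=\; \frac{k^2}{2n^2} \;<\; \delta .
\]
Therefore $f_{\max}$ is the unique element of $\{v_k,\dots,v_n\}$ in the half-open interval $[\,f(P_\varepsilon),\,f(P_\varepsilon)+\delta\,)$ --- two such values would be closer than $\delta$ --- and scanning $\omega = k,\dots,n$ locates it, and hence $\omega(G)$, in $O(n)$ further arithmetic operations. (Degenerate small $n$, where $n(n-1)$ is too small, fall under the $O(n^k)$ brute-force step.)

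Assembling the pieces, an FPTAS polynomial in $n$ for maximizing quadratics over $\Gr(k,n)$ would compute $\omega(G)$ for every graph $G$ in time polynomial in $n$, forcing $\mathrm{P}=\mathrm{NP}$. I expect the only genuinely delicate point to be the bookkeeping around $\delta$: verifying that the FPTAS error $\varepsilon(f_{\max}-f_{\min})$ stays strictly below the minimal gap between admissible optimal values, uniformly in $n$, so that a $1/\mathrm{poly}(n)$-accurate approximation pins down $\omega(G)$ \emph{exactly}. Everything else is routine, modulo recalling that exact computation of $\omega(G)$ is already NP-hard.
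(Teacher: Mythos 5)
Your proposal is correct and follows essentially the same route as the paper's proof: brute-force the $\omega(G)<k$ regime in $O(n^k)$ time, then use Proposition~\ref{prop:cli2} together with a gap argument showing that an $\varepsilon$-approximation with $1/\varepsilon = \mathrm{poly}(n)$ pins down $f_{\max} = k^2(1-1/\omega(G))$, and hence $\omega(G)$, exactly. Your explicit computation of the minimal gap $\delta = k^2/(n(n-1))$ and the choice $\varepsilon = 1/(2n^2)$ are a slightly more careful rendering of the same estimate the paper makes with $\varepsilon = 1/(2k^2n^2)$.
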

\begin{proof}
Given any $n$-vertex graph $G =(V,E)$ as input, we may  test all subgraphs with $k$ or fewer vertices and check if $\omega(G) <  k$ in time polynomial in $n$. If  $\omega(G) < k$, this also yields the exact value of $\omega(G)$.
If  $\omega(G) \ge k$, then we find an $\varepsilon$-approximate solution to
\[
\max_{P \in \Gr(k, n)} \sum_{(i, j) \in E} p_{ii} p_{jj}
\]
using an $\mathrm{FPTAS}$, assuming one exists, where
\[
\varepsilon = \frac{1}{2k^2n^2}.
\]
The maximum only takes finitely many different discrete values depending on what $\omega(G)$ is, and these values have relative gaps of at least $\varepsilon$.  So if the maximum value can be approximated to $\varepsilon$-accuracy, then by Proposition~\ref{prop:MS} we get the exact value of $\omega(G)$ . Hence unless $\mathrm{P} = \mathrm{NP}$, there is no $\mathrm{FPTAS}$ for maximizing quadratic polynomials over $\Gr(k, n)$.
\end{proof}

Since orthogonal projection matrices are always positive semidefinite, we have
\[
\Gr(1,n) = \{ P \in \mathbb{S}^n_\p : P^2 = P, \; \tr(P) = 1\},
\]
i.e., it is exactly the set of density matrices of pure states. Its convex hull is then the set of all \emph{density matrices} of mixed states \cite[Theorem~2.5]{QC},
\[
\mathscr{D}_n \coloneqq \{ X \in \mathbb{S}^n_\p : \tr(X) = 1 \} = \conv \Gr(1,n).
\]
From this we obtained the following slightly unexpected payoff.
\begin{corollary}[Quadratic programming over density matrices is NP-hard]\label{coro:density}
Unless $\mathrm{P} = \mathrm{NP}$, there is no $\mathrm{FPTAS}$ that is polynomial in $n$ for maximizing quadratic polynomials over $\mathscr{D}_n$.
\end{corollary}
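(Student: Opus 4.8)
The plan is to reuse the $k=1$ case of the machinery in Section~\ref{sec:onlyn}, exploiting that the relevant objective sees its matrix argument only through the diagonal. Given an $n$-vertex graph $G=(V,E)$, I would take as instance the quadratic polynomial
\[
f(X) \coloneqq \sum_{(i,j)\in E} x_{ii} x_{jj}, \qquad X=(x_{ij})\in\mathbb{R}^{n\times n},
\]
which is a genuine degree-two polynomial function on $\mathscr{D}_n$ in the sense of Definition~\ref{def:polysub}; that definition applies to an arbitrary subset of $\mathbb{R}^{n\times n}$, so the fact that $\mathscr{D}_n$ is a convex body rather than a manifold is immaterial. Its description has size polynomial in $n$ and $|E|$, and the problem size of $\mathcal{M}=\mathscr{D}_n$ is just $n$.

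The first step is to compute $\max_{X\in\mathscr{D}_n}f(X)$ exactly. Since $f$ factors through $\diag$, and since $\diag(\mathscr{D}_n)=\Delta_{1,n}$ — a density matrix has nonnegative diagonal summing to $1$, and conversely $\diag(x)\in\mathscr{D}_n$ for every $x\in\Delta_{1,n}$, this last point being elementary and not requiring Schur--Horn — we obtain
\[
\max_{X\in\mathscr{D}_n}f(X)=\max_{x\in\Delta_{1,n}}\sum_{(i,j)\in E}x_i x_j = 1-\frac{1}{\omega(G)},
\]
the last equality being the Motzkin--Straus identity \eqref{eq:MS}. Equivalently, the inclusions $\Gr(1,n)\subseteq\mathscr{D}_n$ and $\diag(\mathscr{D}_n)\subseteq\Delta_{1,n}$ sandwich the maximum between $\max_{P\in\Gr(1,n)}f=1-1/\omega(G)$, which is Proposition~\ref{prop:cli2} with $k=1$ (always admissible since $\omega(G)\ge1$), and $\max_{\Delta_{1,n}}=1-1/\omega(G)$. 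One also notes $f_{\min}=0$, attained at $X=\diag(e_i)$ because $G$ is loopless, so $f_{\max}-f_{\min}\le1$.

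The second step is the same $\mathrm{FPTAS}$-to-exact-value bookkeeping as in the proof of Theorem~\ref{thm:NP2}. The value $1-1/\omega(G)$ ranges over $\{0,\tfrac12,\tfrac23,\dots,1-\tfrac1n\}$, with consecutive entries differing by at least $1/(n(n+1))$. Taking $\varepsilon=1/(4n^2)$, so that $1/\varepsilon$ is polynomial in $n$, an $\mathrm{FPTAS}$ would return $X_\varepsilon\in\mathscr{D}_n$ with $f(X_\varepsilon)\ge f_{\max}-\varepsilon(f_{\max}-f_{\min})\ge f_{\max}-\varepsilon$; since also $f(X_\varepsilon)\le f_{\max}$, evaluating the rational number $f(X_\varepsilon)$ confines $f_{\max}$ to the interval $[f(X_\varepsilon),f(X_\varepsilon)+\varepsilon]$ of length $\varepsilon<1/(n(n+1))$, hence pins down $\omega(G)$ exactly and in time polynomial in $n$. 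This would compute the clique number, so no such $\mathrm{FPTAS}$ exists unless $\mathrm{P}=\mathrm{NP}$.

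There is essentially no hard step here; the corollary really is a ``payoff'' of Section~\ref{sec:onlyn}. The two points deserving a line of justification are (a) that passing from $\Gr(1,n)$ to its convex hull $\mathscr{D}_n$ does not change the maximum, which holds precisely because $f$ depends only on $\diag(X)$ and $\diag$ has the same image $\Delta_{1,n}$ on both sets, and (b) the elementary gap arithmetic $1/(4n^2)<1/(n(n+1))$, valid for all $n\ge1$.
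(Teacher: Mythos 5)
Your proposal is correct and follows essentially the same route as the paper: reduce to the Motzkin--Straus identity via the surjection $\diag:\mathscr{D}_n\to\Delta_{1,n}$ (which, as you note, for the convex hull needs no Schur--Horn) and then run the same gap-separation argument as in Theorem~\ref{thm:NP2} with $k=1$. The only difference is that you spell out the $\mathrm{FPTAS}$ bookkeeping that the paper delegates by reference, and your $\varepsilon=1/(4n^2)$ versus the paper's $1/(2k^2n^2)$ is immaterial.
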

\begin{proof}
Following the proof of Proposition~\ref{prop:cli2}, the diagonal map $\diag : \mathbb{R}^{n \times n} \to \mathbb{R}^n$, when restricted to $\mathscr{D}_n$, gives a surjection
\[
\diag : \mathscr{D}_n \to  \Delta_{1,n}, \quad X \mapsto \diag(X).
\]
Relabeling $x_i = x_{ii}$, $i=1,\dots,n$, we get
\[
\max_{X \in \mathscr{D}_n} \sum_{(i, j) \in E} x_{ii} x_{jj}  = \max_{x \in \Delta_{1, n}}  \sum_{(i, j) \in E} x_i x_j = 1-\frac{1}{\omega(G)}
\]
by \eqref{eq:MS}. Now it remains to repeat, with $k =1$, the same argument in the second paragraph of the proof of Theorem~\ref{thm:NP2}.
\end{proof}
Observe that we would not have been able to obtain this using the results in Section~\ref{sec:both}, which do not allow $k$ to be fixed.

\section{NP-hardness for other models of the Grassmannian}\label{sec:other}

We give a list of explicit, polynomial-time change-of-coordinates formulas for transformation between various models of the Grassmannian, from which it will follow that our NP-hardness results in Sections~\ref{sec:both} and \ref{sec:onlyn} apply to all models of the Grassmannian in  Tables~\ref{tab:quo} and \ref{tab:sub}.

\begin{proposition}\label{prop:redu}
There exist maps $\varphi_1,\dots, \varphi_5$ in the diagram below
\begin{equation}\label{eq:cd}
\begin{tikzcd}
	{\O(n)/(\O(n-k)\times \O(k))} & {\V(k,n)/\O(k)} & {\Gr_\pi(k, n)} & {\Gr_{a,b}(k, n)} \\
	{\GL(n)/\P(k,n)} & {\St(k,n)/\GL(k)}
	\arrow["{\varphi_1}", from=1-1, to=1-2]
	\arrow["{{\varphi_5}}"', from=1-1, to=2-1]
	\arrow["{\varphi_2}", from=1-2, to=1-3]
	\arrow["{{\varphi_4}}"', from=1-2, to=2-2]
	\arrow["{\varphi_3}", from=1-3, to=1-4]
\end{tikzcd}
\end{equation}
that are diffeomorphisms of smooth manifolds, and polynomial-time computable.
\end{proposition}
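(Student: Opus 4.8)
The plan is to give for each $i$ an explicit matrix formula for $\varphi_i$ in the direction drawn, and then check three things: that it is well-defined on equivalence classes, that it is a bijection, and that it and its inverse are smooth. Polynomial-time computability will be immediate from the formulas. Concretely, I would take $\varphi_4$ to be the map on quotients induced by the inclusions $\V(k,n)\hookrightarrow\St(k,n)$ and $\O(k)\hookrightarrow\GL(k)$, and $\varphi_5$ the map induced by $\O(n)\hookrightarrow\GL(n)$ and the block-diagonal embedding $\O(n-k)\times\O(k)\hookrightarrow\P(k,n)$ (a block-diagonal orthogonal matrix is block-upper-triangular with orthogonal, hence invertible, diagonal blocks), so that on representatives $\varphi_4(\lb Y\rb)=\lb Y\rb$ and $\varphi_5(\lb Q\rb)=\lb Q\rb$; $\varphi_1(\lb Q\rb)=\lb Q_1\rb$, where $Q_1\in\mathbb{R}^{n\times k}$ is the block of the first $k$ columns of $Q\in\O(n)$; $\varphi_2(\lb Y\rb)=YY^\tp$; and $\varphi_3(P)=(a-b)P+bI_n$, with rational inverse $W\mapsto(a-b)^{-1}(W-bI_n)$ since $a\neq b$. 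Each of these is evaluated on representatives by finitely many matrix additions and multiplications and at most one $k\times k$ inversion, all polynomial-time in the bit model, and in each case the output is automatically a legitimate representative (e.g.\ the first $k$ columns of an orthogonal matrix form an orthonormal $k$-frame), so the computability claim holds.

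Well-definedness is a one-line check throughout: $(YB)(YB)^\tp=YY^\tp$ for $B\in\O(k)$; the $\O(k)$-coset $\lb Q_1\rb$ is unchanged when $Q$ is right-multiplied by a block-diagonal matrix; and $\varphi_3$ preserves the defining data of $\Gr_{a,b}(k,n)$ because $P^2=P$ gives $\bigl((a-b)P+(b-a)I_n\bigr)\bigl((a-b)P\bigr)=(a-b)^2(P-I_n)P=0$ and $\tr\bigl((a-b)P+bI_n\bigr)=(a-b)k+bn=ka+(n-k)b$. For bijectivity, all four Grassmannian models in the diagram are identified with the set of $k$-planes in $\mathbb{R}^n$ by $P\mapsto\im P$, $\lb Y\rb\mapsto\im Y$, $\lb S\rb\mapsto\im S$, and $\lb Q\rb\mapsto$ the span of the first $k$ columns of $Q$, and the $\varphi_i$ are precisely the resulting identifications; injectivity of $\varphi_1,\varphi_4,\varphi_5$ uses that a change of frame preserving orthonormality of the columns, or a change of basis preserving their orthogonality, is itself orthogonal, and surjectivity uses Gram--Schmidt to produce an orthonormal representative of a given $k$-plane or of its complement. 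The only genuinely nontrivial ingredient is smoothness of the inverses: $\varphi_1,\varphi_2,\varphi_3$ are (descents of) restrictions of polynomial maps on ambient matrix space and $\varphi_4,\varphi_5$ are induced by smooth inclusions; $\varphi_3^{-1}$ is affine; and the remaining inverses $\varphi_1^{-1},\varphi_2^{-1},\varphi_4^{-1},\varphi_5^{-1}$ are smooth because near any point one may pick a smoothly varying basis of the relevant $k$-plane or its complement and apply Gram--Schmidt, which is smooth wherever the associated Gram matrix is positive definite. Equivalently, each $\varphi_i$ is a smooth bijection of equidimensional manifolds with everywhere-injective differential, hence a diffeomorphism.

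I expect the main obstacle to be not any single computation but the bookkeeping around the quotient models: making precise what it means to \emph{compute} a map between quotient manifolds --- namely, to produce a representative of the image coset from one of the source coset --- and, more substantively, checking that the two noncompact quotients $\GL(n)/\P(k,n)$ and $\St(k,n)/\GL(k)$ carry a genuine smooth (indeed algebraic) structure at all, so that $\varphi_4,\varphi_5$ are diffeomorphisms rather than merely continuous bijections; this is the closed-orbit situation already recorded in Section~\ref{sec:poly}, after which the diffeomorphism statements are standard homogeneous-space theory. Finally, I would note that the arrows are oriented so that in the drawn direction every $\varphi_i$ is given by polynomial (and for $\varphi_3$ also rational) operations in matrix entries; the reverse maps involve orthonormalization and are not obviously bit-model polynomial-time on their own, but the composites that land in $\Gr_\pi(k,n)$ --- such as $\varphi_2\circ\varphi_4^{-1}\colon\lb S\rb\mapsto S(S^\tp S)^{-1}S^\tp$ and $\varphi_2\circ\varphi_1\circ\varphi_5^{-1}\colon\lb X\rb\mapsto X_1(X_1^\tp X_1)^{-1}X_1^\tp$ --- are rational, which is all that is needed to push the hardness results of Sections~\ref{sec:both} and~\ref{sec:onlyn} through the whole diagram.
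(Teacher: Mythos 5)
Your proposal is correct and follows essentially the same route as the paper: the five forward maps you write down ($\varphi_1$ = first $k$ columns, $\varphi_2(\lb Y\rb)=YY^\tp$, $\varphi_3$ affine, $\varphi_4,\varphi_5$ induced by inclusions) are exactly the paper's, and the verification strategy (well-definedness on cosets, bijectivity, smooth bijection with injective differential between equidimensional manifolds) matches. The only presentational difference is that the paper exhibits the inverses explicitly via eigenvalue and QR decompositions and cites their $\mathcal{O}(n^\omega)$ complexity, where you argue invertibility more abstractly via Gram--Schmidt and correctly flag (as the paper does only after the proof) that the inverses' polynomial-time evaluability does not by itself give a bit-model reduction of defining parameters.
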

\begin{proof}
We will give each of these maps and its inverse explicitly. As there are multiple quotient manifolds involved in \eqref{eq:cd}, we distinguish them by labeling with self-explanatory identifiers in subscript.

The map $\varphi_1 : \O(n)/(\O(n-k)\times \O(k)) \to \V(k,n) /\O(k)$ and its inverse are given by
\[
\varphi_1 (\lb Q \rb_{\O}) = \biggl\lb Q \begin{bmatrix} I_k \\ 0 \end{bmatrix} \biggr\rb_{\V},\quad \varphi_1^{-1} (\lb Y \rb_{\V}) = \lb Q_Y \rb_{\O}.
\]
Here $Q_Y \in \O(n)$ is the eigenvector matrix in the eigenvalue decomposition $YY^\tp = Q_Y \begin{bsmallmatrix}
I_k & 0 \\
0 & 0 
\end{bsmallmatrix} Q_Y^\tp$. Note that multiplying an $n \times n$ matrix on the right by $\begin{bsmallmatrix} I_k \\ 0 \end{bsmallmatrix} \in \mathbb{R}^{n \times k}$ gives an $n \times k$ matrix comprising the first $k$ columns. We check that
\begin{align*}
\varphi_1^{-1} \circ\varphi_1 (\lb Q \rb_{\O}) &= \varphi_1^{-1} \left(
\biggl\lb Q \begin{bmatrix} I_k \\ 0 \end{bmatrix} \biggr\rb_{\V}
\right) = \lb Q \rb_{\O},  \\
\varphi_1 \circ\varphi_1^{-1} (\lb Y \rb_{\V}) &= \varphi_1 (\lb Q_Y \rb_{\O}) = \biggl\lb Q_Y \begin{bmatrix} I_k \\ 0 \end{bmatrix} \biggr\rb_{\V} = \lb Y \rb_{V}.
\end{align*}
Here the last equality follows from the observation that $ZZ^\tp = Y Y^\tp$ for $Z, Y\in \V(k,n)$ if and only if $Z = Y P$ for some $P\in \O(k)$.

The map $\varphi_2 : \V(k,n) /\O(k) \to \Gr_\pi(k,n)$ and its inverse are given by
\[
\varphi_2 (\lb Y \rb_{\V}) = Y Y^\tp,\quad \varphi_2^{-1} (P) =\biggl\lb Q_P \begin{bmatrix} I_k \\ 0 \end{bmatrix} \biggr\rb_{\V}.
\]
Here $Q_P \in \O(n)$ is the eigenvector matrix in the eigenvalue decomposition $P = Q_P \begin{bsmallmatrix}
I_k & 0 \\
0 & 0 
\end{bsmallmatrix} Q_P^\tp$.  We check that 
\[
\varphi_2^{-1} \circ\varphi_2 (\lb Y \rb_{\V}) = \varphi_2^{-1} ( Y  Y^\tp) = \lb Y \rb_{\V},
\]
since the  column vectors of $Y$ are exactly the nonzero eigenvectors of the orthogonal projection matrix $Y Y^\tp$.  We also check that
\[
\varphi_2 \circ\varphi_2^{-1} (P) = \varphi_2 \biggl( \biggl\lb Q_P \begin{bmatrix} I_k \\ 0 \end{bmatrix} \biggr\rb_{\V}\biggr)  = Q_P \begin{bmatrix}
I_k & 0 \\
0 & 0 
\end{bmatrix} Q_P^\tp = P.
\]

The map $\varphi_3 : \Gr_\pi(k,n) \to \Gr_{a,b}(k,n)$ and its inverse are given by
\[
\varphi_3 (P) = (a - b) P + b I_n,  \quad \varphi_3^{-1} (W) = \frac{1}{a-b} (W - b I_n),
\]
which is obvious by inspection.

The map $\varphi_4 : \V(k,n) /\O(k) \to \St(k,n)/\GL(k)$ and its inverse are given by
\[
\varphi_4 ( \lb Y \rb_{\V}) = \lb Y \rb_{\St},\quad \varphi_4^{-1} ( \lb S \rb_{\St}) = \lb Y_S \rb_{\V}.
\]
Here $Y_S\in \V(k,n)$ is the unique orthogonal factor in the condensed QR decomposition $S = Y_S  R_S$ such that $R_S\in \mathbb{R}^{k\times k}$ is upper triangular with positive diagonal. We check that
\begin{align*}
\varphi_4^{-1} \circ \varphi_4 ( \lb Y \rb_{\V}) &= \varphi_4^{-1}(\lb Y \rb_{\St}) =  \lb Y \rb_{\V},  \\
\varphi_4 \circ \varphi_4^{-1} ( \lb S \rb_{\St}) &= \varphi_4(\lb Y_S \rb_{\V}) =  \lb Y_S \rb_{\St} = \lb Y_S R_S \rb_{\St} = \lb S \rb_{\St}.
\end{align*}

The map $\varphi_5 : \O(n)/(\O(n-k)\times \O(k)) \to \GL(n)/ \P(k,n)$ and its inverse are given by
\[
\varphi_5 ( \lb Q \rb_{\O}) = \lb Q \rb_{\GL},\quad \varphi_5^{-1} ( \lb X \rb_{\GL}) = \lb Q_X \rb_{\O}.
\]
Here $Q_X \in \O(n)$ is the unique orthogonal matrix in the full QR-decomposition $X = Q_X R_X$ where $R_X\in \mathbb{R}^{n\times n}$ is upper triangular with positive diagonal.  Since $R_X \in \P(k,n)$,  we have 
\begin{align*}
\varphi_5^{-1} \circ \varphi_5 ( \lb Q \rb_{\O}) &= \varphi_5^{-1}(\lb Q \rb_{\GL}) =  \lb Q \rb_{\O},  \\
\varphi_5 \circ \varphi_5^{-1} ( \lb X \rb_{\GL}) &= \varphi_1(\lb Q_X \rb_{\O}) =  \lb Q_X \rb_{\GL} = \lb Q_X R_X \rb_{\GL} =  \lb X \rb_{\GL}.
\end{align*}

Each $\varphi_i$ is smooth and its differential $d \varphi_i$ is an isomorphism at every point.  This is clear for $\varphi_3$ as it is an invertible linear map. This holds for $\varphi_2$ as it is induced by the smooth map $\overline{\varphi}_2: \V(k,n) \to \Gr_{\pi}(k,n)$, $Y \mapsto Y Y^\tp$; and $d_{\lb Y \rb} \varphi_2$ is an isomorphism as $d_Y \overline{\varphi}_2$ is surjective and its kernel is isomorphic to the Lie algebra $\mathfrak{o}(k)$ of $k \times k$ skew symmetric matrices.  The arguments for $\varphi_3$, $\varphi_4$, $\varphi_5$ are similar to that for $\varphi_2$.  Hence $\varphi_1,\dots, \varphi_5$ are all diffeomorphisms since they are also bijective \cite[Theorem~4.14]{Lee13}. 

Finally, since these diffeomorphisms $\varphi_1,\dots, \varphi_5$ and their inverses can be computed with matrix-matrix products, QR decomposition, and symmetric eigenvalue decomposition, all of which have $\mathcal{O}(n^\omega)$ time complexity,  with $\omega$ the exponent of matrix multiplication --- see \cite{Schonhage72} and \cite[Remark~16.26]{BCS97}. We conclude that these diffeomorphisms are all polynomial-time computable.
\end{proof}

By Proposition~\ref{prop:redu}, every arrow in \eqref{eq:cd} may be reversed, allowing one to get from any model to any other model in \eqref{eq:cd} by simply composing and inverting maps, with the resulting map remaining a polynomial-time computable diffeomorphism. But the reader is reminded of the dichotomy we highlighted on p.~\pageref{it:poly} --- having a polynomial-time computable diffeomorphism does not translate directly to polynomial-time reduction under the bit complexity model.

Take $\varphi_2 : \V(k,n) /\O(k) \to \Gr_\pi(k,n)$ for illustration. Saying that it is a  polynomial-time computable diffeomorphism implies that if we can evaluate  $f(P)$ for some $f : \Gr_\pi(k,n) \to \mathbb{R}$ and any $P \in \Gr_\pi(k,n)$ in a certain time complexity, then we can evaluate $f \circ \varphi_2(\lb Y \rb)$ for any $ \lb  Y \rb \in \V(k,n) /\O(k)$ with an increase in time complexity of at most a polynomial factor. It does not automatically imply that we have a polynomial-time algorithm to transform the defining parameters of $f$ into the defining parameters of $f \circ \varphi_2$. Example~\ref{eg:quad} illustrates this in the case when $f$ is a quadratic polynomial, showing how the defining parameters of $f$ are transformed into those of $f \circ \varphi_2$. Corollary~\ref{cor:poly}\eqref{it:para} shows when $f$ is any polynomial function, then we do indeed have a polynomial-time (in fact, constant-time) algorithm to transform the defining parameters of $f$ to those of $f \circ \varphi_2$ and $f \circ \varphi_2 \circ \varphi_1$.

\begin{example}[Quadratic case]\label{eg:quad}
Suppose $f : \Gr_\pi(k,n) \to \mathbb{R}$ is a quadratic polynomial, i.e.,
\begin{equation}\label{eq:quad}
	f(P) = \sum_{i,j,r,s=1}^n a_{ijrs} p_{ij} p_{rs} + \sum_{i,j=1}^n b_{ij} p_{ij} + c.
\end{equation}
We assume $c = 0$. Then on $\V(k,n)/\O(k)$,
\begin{multline}\label{eq:quadY}
f\circ \varphi_2(\lb Y \rb_{\V}) = f(YY^\tp)\\
=\sum_{i,j,r,s=1}^n a_{ijrs} (y_{i1}y_{j1} + \dots + y_{ik}y_{jk}) (y_{r1}y_{s1} + \dots + y_{rk}y_{sk}) + \sum_{i,j=1}^n b_{ij} (y_{i1}y_{j1} + \dots + y_{ik}y_{jk})
\end{multline}
where $Y \in \V(k,n)$ is any representative of $\lb Y \rb_{\V} \in \V(k, n)/\O(k)$. To be even more concrete, suppose $k = 2$, then the $(i,j,r,s)$th term in the first sum takes the form
\[
a_{ijrs} y_{i1}y_{j1}y_{r1}y_{s1} + a_{ijrs} y_{i2}y_{j2}y_{r1}y_{s1} + a_{ijrs}y_{i1}y_{j1} y_{r2}y_{s2} + a_{ijrs}y_{i2}y_{j2} y_{r2}y_{s2}
\]
and the $(i,j)$th term in the second sum takes the form
\[
 b_{ij} y_{i1}y_{j1} +  b_{ij}y_{i2}y_{j2}.
\]
In general, the coefficient $a_{ijrs}$ in $f$ will be repeated $k^2$ times in the defining parameters of $f\circ \varphi_2$  and the coefficient $b_{ijrs}$ will be repeated $k$ times. More generally, a degree-$d$ term in $f$ will see its coefficient repeated $k^d$ times as  defining parameters of $f\circ \varphi_2$. This discussion applies verbatim to $f \circ \varphi_2 \circ \varphi_1$. \qed
\end{example}

We show that Example~\ref{eg:quad} holds true more generally for any polynomial function $f$.
\begin{corollary}\label{cor:poly}
Let $ f:\Gr_\pi(k, n) \to \mathbb{R}$ be a polynomial function of bounded degree. Then $f$ can be transformed in polynomial-time into:
\begin{enumerate}[\upshape (a)]
\item a polynomial function on $\Gr_{a,b}(k, n)$ of the same degree as $f$ and whose coefficients are affine combinations of the coefficients of $f$;
\item\label{it:para} a polynomial function on $\O(n)/(\O(n-k)\times \O(k))$ or $\V(k,n)/\O(k)$ defined by parameters given by the same coefficients of $f$ but repeated with some multiplicities.
\end{enumerate}
\end{corollary}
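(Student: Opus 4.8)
The plan is to prove both parts at once, by the device already used in Example~\ref{eg:quad}: feed the explicit change-of-coordinates maps of Proposition~\ref{prop:redu} into $f$ and expand. For part~(a) the relevant map is the inverse $\varphi_3^{-1}\colon\Gr_{a,b}(k,n)\to\Gr_\pi(k,n)$; for part~(b) it is $\varphi_2\colon\V(k,n)/\O(k)\to\Gr_\pi(k,n)$ together with its composite $\varphi_2\circ\varphi_1$ with $\varphi_1\colon\O(n)/(\O(n-k)\times\O(k))\to\V(k,n)/\O(k)$. Throughout, write $d=\deg f$; by hypothesis $d$ is bounded, so $f$ has only $O(n^{2d})$ nonzero coefficients, a quantity polynomial in $n$.

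For part~(a): the transform of $f$ is $g\coloneqq f\circ\varphi_3^{-1}$, that is, $g(W)=f\bigl(\tfrac{1}{a-b}(W-bI_n)\bigr)$. Since $\varphi_3^{-1}$ is the restriction to $\Gr_{a,b}(k,n)$ of the affine-linear endomorphism of $\mathbb{R}^{n\times n}$ that sends the entry $p_{ij}$ to $\tfrac{1}{a-b}w_{ij}-\tfrac{b}{a-b}\delta_{ij}$, substituting this into the polynomial defining $f$ and collecting terms exhibits $g$ as a polynomial function on $\Gr_{a,b}(k,n)$ in the sense of Definition~\ref{def:polysub}. Because this map differs from a linear isomorphism only by a constant translation, the translation contributes only to strictly lower-degree terms while the nonzero scalar $\tfrac{1}{a-b}$ keeps the top-degree part nonzero; hence $\deg g=\deg f$. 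Expanding a degree-$e$ monomial of $f$ under the substitution produces at most $2^{e}\le 2^{d}$ monomials in $W$, so each coefficient of $g$ is an explicit affine combination of the coefficients of $f$, and the whole coefficient list of $g$ is computed in time polynomial in $n$.

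For part~(b): on $\V(k,n)/\O(k)$ the transform is $f\circ\varphi_2$, which by Definition~\ref{def:polyquo} is the polynomial function defined by $\bar g\colon\V(k,n)\to\mathbb{R}$, $\bar g(Y)=f(YY^\tp)$; this is polynomial in the entries $y_{it}$ of $Y$ (it is $f$ composed with $Y\mapsto YY^\tp$), and it is $\O(k)$-invariant because $(YP)(YP)^\tp=YY^\tp$ for $P\in\O(k)$. Substituting $(YY^\tp)_{ij}=\sum_{t=1}^{k}y_{it}y_{jt}$ turns a monomial $c\prod_{s=1}^{e}p_{i_s j_s}$ of $f$ into $c\sum_{(t_1,\dots,t_e)\in\{1,\dots,k\}^{e}}\prod_{s=1}^{e}y_{i_s t_s}y_{j_s t_s}$ --- exactly Example~\ref{eg:quad} for arbitrary degree --- so the coefficient $c$ reappears verbatim, repeated $k^{e}\le k^{d}$ times. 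The defining parameters of $\bar g$ are therefore literal copies of those of $f$ with these multiplicities, obtainable in constant time per parameter, and there are only $O(n^{2d}k^{d})$ of them, polynomial in $n$ and $k$. For $\O(n)/(\O(n-k)\times\O(k))$ one precomposes with $\varphi_1$: since $\varphi_1(\lb Q\rb_{\O})$ is represented by the first $k$ columns of $Q\in\O(n)$, the function $f\circ\varphi_2\circ\varphi_1$ is defined by $Q\mapsto f(QJQ^\tp)$ with $J=\begin{bsmallmatrix}I_k&0\\0&0\end{bsmallmatrix}$; this is polynomial in the $q_{ij}$ and is $(\O(n-k)\times\O(k))$-invariant because $RJR^\tp=J$ for every block-diagonal orthogonal $R$, and since $(QJQ^\tp)_{ij}=\sum_{s=1}^{k}q_{is}q_{js}$ its expansion is word-for-word the one above with $y_{it}$ replaced by $q_{it}$, so its defining parameters are again copies of those of $f$.

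The main obstacle is bookkeeping rather than any single hard idea. One must (i) check that each transformed object is a polynomial function in the precise sense of Definition~\ref{def:polysub} or Definition~\ref{def:polyquo}, which for the quotient models means verifying the $G$-invariance of the representative polynomials --- the $\O(k)$- and $(\O(n-k)\times\O(k))$-invariances noted above; (ii) be careful about what counts as a ``defining parameter'', namely the uncollected coefficient list, which is precisely why the honest statement in part~(b) is ``repeated with multiplicities'' rather than an equality of reduced polynomials; and (iii) invoke the bounded-degree hypothesis to keep both the number of parameters and the arithmetic of part~(a) of size polynomial in $n$ and $k$. The one genuinely substantive point, minor as it is, is the degree claim in part~(a): it holds because $\varphi_3^{-1}$ is a linear isomorphism up to a constant translation, so no cancellation in the top-degree part is possible.
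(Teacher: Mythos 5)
Your proposal is correct and follows essentially the same route as the paper: part (a) via the affine substitution $\varphi_3^{-1}$, and part (b) by expanding $f(YY^\tp)$ and $f(QJQ^\tp)$ so that each coefficient of $f$ reappears verbatim with multiplicity $k^{e}$, exactly as in Example~\ref{eg:quad}. The extra details you supply --- the explicit $G$-invariance checks, the degree-preservation argument for the affine map, and the parameter counts --- are all correct and simply make explicit what the paper's terser proof leaves implicit.
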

\begin{proof}
From the proof of Proposition~\ref{prop:redu}, $\varphi_3^{-1} : \Gr_{a,b}(k,n) \to  \Gr_\pi(k,n)$ is an invertible affine map. So $f \circ \varphi_3^{-1}$ is a polynomial function of the same degree as $f$ whose coefficients are given by affine combinations of those in $f$.

For the map  $\varphi_1 : \O(n)/(\O(n-k)\times \O(k)) \to \V(k,n) /\O(k)$, observe that no matter which representative $Q \in \O(n)$ we pick for $\lb Q \rb_{\O}$, the matrix  $Q \begin{bsmallmatrix} I_k \\ 0 \end{bsmallmatrix} \in \V(k,n)$ will always be a valid representative for $\varphi_1(\lb Q \rb_{\O})$. For the map  $\varphi_2 : \V(k,n) /\O(k) \to  \Gr_\pi(k,n)$, observe that no matter which representative $Y \in \V(k,n)$ we pick for $\lb Y \rb_{\V}$, it will always be mapped to the same matrix  $YY^\tp \in \Gr_\pi(k,n)$. So these functions are well-defined, i.e., they give the same value irrespective of the representative picked.

The value of $f \circ \varphi_2$ on $\lb Y \rb_{\V} \in \V(k, n)/\O(k)$ is given by $f(YY^\tp)$ where $Y \in \V(k,n)$ is any representative of $\lb Y \rb_{\V}$. The value of $f \circ \varphi_2 \circ \varphi_1$ on $\lb Q \rb_{\O} \in \O(n)/(\O(n-k) \times \O(k))$ is given by $f\bigl(Q\begin{bsmallmatrix} I_k & 0 \\ 0 & 0 \end{bsmallmatrix} Q^\tp\bigr)$ where $Q \in \O(n)$ is any representative of $\lb Q \rb_{\O}$. In other words the parameters defining the map $f \circ \varphi_2$ or  $f \circ \varphi_2 \circ \varphi_1$ are just copies of the coefficients of $f$.
\end{proof}

We now state our main result of this section.
\begin{theorem}[Grassmannian optimization is NP-hard I]\label{thm:NP3}
Let $k \in \mathbb{N}$ be fixed. Unless $\mathrm{P} = \mathrm{NP}$, there is no $\mathrm{FPTAS}$ that is polynomial in $n$ for:
\begin{enumerate}[\upshape (i)]
\item maximizing quadratic polynomials over $\Gr_{a,b}(k, n)$;
\item maximizing polynomial functions over $\O(n)/(\O(n-k)\times \O(k))$ or $\V(k,n)/\O(k)$.
\end{enumerate}
\end{theorem}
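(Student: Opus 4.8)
The plan is to carry the hard instance of Section~\ref{sec:onlyn} across the polynomial-time diffeomorphisms of Proposition~\ref{prop:redu}, using Corollary~\ref{cor:poly} to control what happens to the \emph{defining parameters} of the objective (not merely its values), and then to rerun the $\mathrm{FPTAS}$-versus-$\mathrm{P}=\mathrm{NP}$ dichotomy of Theorem~\ref{thm:NP2}. Concretely: given an $n$-vertex graph $G=(V,E)$ with $k$ fixed, first enumerate all subgraphs on at most $k$ vertices in time $O(n^k)$; if $\omega(G)<k$ this already returns $\omega(G)$ exactly, so we may assume $\omega(G)\ge k$. Take $f:\Gr_\pi(k,n)\to\mathbb{R}$, $f(P)=\sum_{(i,j)\in E}p_{ii}p_{jj}$, the quadratic polynomial of Proposition~\ref{prop:cli2}, for which $f_{\max}=k^2(1-1/\omega(G))$; since every $P\in\Gr_\pi(k,n)$ is positive semidefinite we have $f\ge 0$ on $\Gr_\pi(k,n)$, hence $0\le f_{\min}\le f_{\max}<k^2$.

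For part (i), Corollary~\ref{cor:poly}(a) yields in constant time a degree-two polynomial $g\coloneqq f\circ\varphi_3^{-1}$ on $\Gr_{a,b}(k,n)$ (for fixed rationals $a\ne b$) whose coefficients are affine combinations of those of $f$, hence rational of bounded bit-length. For part (ii), Corollary~\ref{cor:poly}(b) yields in polynomial time polynomial functions $h\coloneqq f\circ\varphi_2$ on $\V(k,n)/\O(k)$ and $h'\coloneqq f\circ\varphi_2\circ\varphi_1$ on $\O(n)/(\O(n-k)\times\O(k))$ whose defining parameters are copies, with multiplicities at most $k^2$, of the coefficients of $f$. Since $\varphi_3^{-1}$, $\varphi_2$, and $\varphi_2\circ\varphi_1$ are surjections onto $\Gr_\pi(k,n)$, each transported problem has optimal value $f_{\max}$ and optimality gap $f_{\max}-f_{\min}\le k^2$.

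Now suppose, for contradiction, that an $\mathrm{FPTAS}$ polynomial in $n$ existed for one of these three problems. Run it on the corresponding transported instance with $\varepsilon=1/(2k^2n^2)$, which is polynomial in $n$. Since it runs in time polynomial in $n$ and $1/\varepsilon$, it outputs a point or equivalence class of polynomially bounded bit-length at which the transported objective can be evaluated in polynomial time, giving a value $v$; by definition $v\le f_{\max}$ and $f_{\max}-v\le\varepsilon(f_{\max}-f_{\min})\le\varepsilon k^2=1/(2n^2)$, so $f_{\max}\in[v,\,v+1/(2n^2)]$. The candidate values $k^2(1-1/m)$ for $m=k,\dots,n$ are pairwise at distance at least $k^2/\bigl((n-1)n\bigr)>1/(2n^2)$, so exactly one of them lies in this interval, namely $f_{\max}$; scanning the $O(n)$ candidates recovers $m=\omega(G)$. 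This would compute $\omega(G)$ in polynomial time, impossible unless $\mathrm{P}=\mathrm{NP}$. The same $\varepsilon$ and rounding work in all three cases, proving (i) and (ii).

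The one step that is not pure bookkeeping is insisting the reduction be polynomial-time in the \emph{bit} model rather than merely ``up to a diffeomorphism'': a polynomial-time computable diffeomorphism lets one \emph{evaluate} the transported objective but does not by itself let one \emph{write it down} efficiently, which is what is needed to hand it to the $\mathrm{FPTAS}$ as input. This is exactly the dichotomy flagged on p.~\pageref{it:poly}, and Corollary~\ref{cor:poly} is precisely the device that resolves it; with it in hand, nothing else stands in the way and the proof collapses to the computations already done for Theorem~\ref{thm:NP2}.
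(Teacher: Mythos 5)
Your proposal is correct and follows essentially the same route as the paper: transport the hard instance $f$ of Proposition~\ref{prop:cli2} to the other models via $\varphi_3^{-1}$, $\varphi_2$, and $\varphi_2\circ\varphi_1$, invoke Corollary~\ref{cor:poly} to ensure the defining parameters (not just values) of the transported objective are obtainable in polynomial time in the bit model, and conclude by contradiction with Theorem~\ref{thm:NP2}. The paper's proof is terser, delegating the $\varepsilon$-gap analysis entirely to Theorem~\ref{thm:NP2}, whereas you re-derive it explicitly; the substance is identical.
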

\begin{proof}
If there were an  $\mathrm{FPTAS}$ for maximizing polynomial functions on $\V(k, n)/\O(k)$, we have an  $\mathrm{FPTAS}$ for maximizing any  quadratic polynomial $ f: \Gr_\pi(k,n) \to \mathbb{R}$ by maximizing
\[
f \circ \varphi_2 : \V(k, n)/\O(k) \to \mathbb{R},
\]
contradicting Theorem~\ref{thm:NP2}. The same argument applies to $f \circ \varphi_3^{-1}$ and $f \circ \varphi_2 \circ \varphi_1$ for the other two models.
\end{proof}
Note that Theorem~\ref{thm:NP3} is in terms of standard bit complexity.  But for the next result we will need to use the oracle complexity mentioned on p.~\pageref{it:cvx}.
\begin{corollary}[Grassmannian optimization is NP-hard II]\label{cor:St}
Let $k \in \mathbb{N}$ be fixed. Unless $\mathrm{P} = \mathrm{NP}$, there is no oracle-$\mathrm{FPTAS}$ that is polynomial in $n$ for maximizing polynomial functions over $\St(k, n)/\GL(k)$ or $\GL(n)/\P(k,n)$.
\end{corollary}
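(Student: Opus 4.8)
The plan is to adapt the proofs of Theorems~\ref{thm:NP2} and \ref{thm:NP3}, but with one essential change: since the change-of-coordinates maps $\varphi_4$ and $\varphi_5$ in Proposition~\ref{prop:redu} are \emph{not} covered by Corollary~\ref{cor:poly} — there is no clean ``repeat the coefficients'' description available, because $\varphi_4^{-1}$ and $\varphi_5^{-1}$ are built from the QR decomposition — we will not try to transform the defining parameters of the objective in polynomial time. Instead we reduce the clique number problem in the oracle model, where all that is needed is that the transformed objective can be \emph{evaluated} in polynomial time, which Proposition~\ref{prop:redu} provides for free.

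Concretely, fix $k$, let $G = (V,E)$ be an $n$-vertex graph, and take $f : \Gr_\pi(k,n) \to \mathbb{R}$, $f(P) = \sum_{(i,j)\in E} p_{ii}p_{jj}$, the quadratic objective from Proposition~\ref{prop:cli2}. Using the maps of Proposition~\ref{prop:redu}, set
\[
g \coloneqq f \circ \varphi_2 \circ \varphi_4^{-1} : \St(k,n)/\GL(k) \to \mathbb{R}, \qquad g' \coloneqq f \circ \varphi_2 \circ \varphi_1 \circ \varphi_5^{-1} : \GL(n)/\P(k,n) \to \mathbb{R}.
\]
Unwinding the definitions, $g(\lb S \rb_{\St}) = f\bigl(S(S^\tp S)^{-1}S^\tp\bigr)$, i.e., $g$ is induced by the $\GL(k)$-invariant regular function $S \mapsto f\bigl(S(S^\tp S)^{-1}S^\tp\bigr)$ on $\St(k,n)$; on the slack-variable realization $\St(k,n) \cong \{(S,t) : t\det(S^\tp S) = 1\}$ of Section~\ref{sec:poly} this function is a genuine polynomial in $(S,t)$, so $g$ is a polynomial function on $\St(k,n)/\GL(k)$ in the sense of Definition~\ref{def:polyquo} (it has larger degree than $f$, which is fine as degree is not preserved by $\varphi_4$). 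The same applies to $g'$ via the slack-variable realization of $\GL(n)$. Because $\varphi_1,\varphi_2,\varphi_4,\varphi_5$ and their inverses are polynomial-time computable and $f$ is evaluable in polynomial time, $g(\lb S\rb_{\St})$ and $g'(\lb X\rb_{\GL})$ can be computed in time polynomial in $n$ from any representative $S$ or $X$ — this is exactly what a value oracle for $g$, resp.\ $g'$, must do. Moreover $\varphi_2\circ\varphi_4^{-1}$ and $\varphi_2\circ\varphi_1\circ\varphi_5^{-1}$ are bijections onto $\Gr_\pi(k,n)$, so
\[
\max_{\St(k,n)/\GL(k)} g \;=\; \max_{\GL(n)/\P(k,n)} g' \;=\; \max_{P\in\Gr_\pi(k,n)} f(P),
\]
which equals $k^2\bigl(1 - 1/\omega(G)\bigr)$ whenever $\omega(G) \ge k$, by Proposition~\ref{prop:cli2}.

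Now suppose toward a contradiction there is an oracle-$\mathrm{FPTAS}$ $\mathscr{A}$ for maximizing polynomial functions over $\St(k,n)/\GL(k)$ that is polynomial in $n$. Given $G$, first test whether $\omega(G) < k$ by examining all subgraphs on at most $k$ vertices, taking $\binom{n}{1}+\dots+\binom{n}{k}$ steps, a polynomial in $n$ for fixed $k$; if so, this already yields $\omega(G)$. Otherwise $\omega(G) \ge k$, and we run $\mathscr{A}$ on $\St(k,n)/\GL(k)$ with accuracy $\varepsilon = 1/(2k^2n^2)$, answering its value queries with the polynomial-time oracle for $g$ above; $\mathscr{A}$ returns $\lb S_\varepsilon\rb_{\St}$ with $g(\lb S_\varepsilon\rb_{\St}) \ge g_{\max} - \varepsilon(g_{\max}-g_{\min}) > g_{\max} - 1/(2n^2)$. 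As $\omega(G)$ ranges over $\{k,\dots,n\}$ the candidate values $k^2(1-1/\omega(G))$ for $g_{\max}$ are pairwise separated by more than $1/n^2$, so the single evaluation $g(\lb S_\varepsilon\rb_{\St})$ pins down $g_{\max}$, hence $\omega(G)$, exactly. This decides the clique number problem in polynomial time, forcing $\mathrm{P} = \mathrm{NP}$. The identical argument with $g'$ handles $\GL(n)/\P(k,n)$.

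The main difficulty here is conceptual clarity about the complexity model rather than mathematics: one must be careful that $\mathscr{A}$ receives only $k$, $n$, $\varepsilon$ and the value oracle as input (with $G$ hard-wired into the oracle), so that ``polynomial in $n$'' is the right notion of problem size, and one must verify that $g$ and $g'$ honestly qualify as polynomial functions on their respective quotients per Definition~\ref{def:polyquo} — this is where the slack-variable realizations of $\St(k,n)$ and $\GL(n)$ are needed, so that the cancellation of the QR square roots in $Y_SY_S^\tp = S(S^\tp S)^{-1}S^\tp$ leaves a genuine polynomial. Everything else is a transcription of the proofs of Theorems~\ref{thm:NP2} and \ref{thm:NP3}.
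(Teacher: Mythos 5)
Your proposal is correct and follows essentially the same route as the paper: compose the hard quadratic on $\Gr_\pi(k,n)$ with $\varphi_2\circ\varphi_4^{-1}$ (resp.\ $\varphi_2\circ\varphi_1\circ\varphi_5^{-1}$), use the oracle model precisely because the QR-based inverses are polynomial-time evaluable but not coefficient-transformable, and contradict the Grassmannian hardness result. The paper simply cites Theorem~\ref{thm:NP2} as a black box where you inline the clique-number argument, and your observation that $g(\lb S\rb_{\St}) = f\bigl(S(S^\tp S)^{-1}S^\tp\bigr)$ is a genuine polynomial on the slack-variable realization is a nice extra justification the paper leaves implicit.
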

\begin{proof}
By Proposition~\ref{prop:redu},
\[
\varphi_2: \V(k,n) /\O(k) \to \Gr_\pi(k,n) , \quad \varphi_4^{-1}  :  \St(k,n)/\GL(k) \to  \V(k,n) /\O(k)
\]
are both polynomial-time computable diffeomorphisms. If there were an  $\mathrm{FPTAS}$ for maximizing polynomial functions on $\St(k, n)/\GL(k)$, we have an   $\mathrm{FPTAS}$ for maximizing any  quadratic polynomial $ f: \Gr_\pi(k,n) \to \mathbb{R}$ by querying
\[
f \circ \varphi_2\circ \varphi_4^{-1} : \St(k, n)/\GL(k) \to \mathbb{R}
\]
as an oracle, contradicting Theorem~\ref{thm:NP2}. The same argument also applies to
\[
f  \circ \varphi_2 \circ \varphi_1\circ \varphi_5^{-1} : \GL(n)/\P(k,n) \to \mathbb{R}. \qedhere
\]
\end{proof}
The proof of Corollary~\ref{cor:St} superficially resembles that of Corollary~\ref{cor:poly}; the difference being that $\varphi_1$ and $\varphi_2$ are simple polynomial maps, whereas $\varphi_4^{-1}$ and $\varphi_5^{-1}$ involve QR decompositions.\footnote{This is inevitable. The inverse of any isometry from $\St(k,n)/\GL(k)$ or $\GL(n)/\P(k,n)$ to $\Gr_\pi(k,n)$ must necessarily involve the QR-decomposition \cite{GS}.}  While polynomial-time algorithms for computing QR decomposition of a full-rank matrix abound (this is why $\varphi_4^{-1}$ and $\varphi_5^{-1}$ can be evaluated on any specific input in polynomial-time), they invariably involve extracting square roots and branching. There is no known rational map that can replace an algorithm for QR decomposition; should one exists, it is expected to be exceedingly complicated so that composing it with a polynomial $f$ would likely result in the defining parameters blowing up to exponentially high degrees.

\section{NP-hardness for Stiefel manifolds and orthogonal group}\label{sec:stie}

As we mentioned in Section~\ref{sec:intro}, unconstrained \emph{quadratic} programming on $\V(1,n) = \mathrm{S}^{n-1}$ is polynomial-time \cite[Section~4.3]{vava}. On the other hand, it is an immediate corollary of Proposition~\ref{prop:cli2} that for any fixed $k \le n$, unconstrained \emph{quartic} programming on $\V(k,n)$ is NP-hard. In fact we may also deduce the same for $\O(n)$, though not by simply setting $\O(n) = \V(n,n)$ as $k$ is fixed:
\begin{corollary}[Stiefel and orthogonal quartic programming is NP-hard]
Let $k \in \mathbb{N}$ be fixed.  Unless $\mathrm{P} = \mathrm{NP}$, there is no $\mathrm{FPTAS}$ that is polynomial in $n$ for maximizing quartic polynomials over $\V(k, n)$ and $\O(n)$.
\end{corollary}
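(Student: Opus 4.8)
The plan is to pull back the hard Grassmannian problem of Proposition~\ref{prop:cli2} along the natural \emph{quadratic} parametrizations of $\Gr_\pi(k,n)$ by $\V(k,n)$ and by $\O(n)$. Composing a quadratic objective with such a parametrization raises the degree from two to four, hence the word ``quartic'', but leaves the optimal value and the range $f_{\max}-f_{\min}$ untouched, which is exactly what is needed to transport an $\mathrm{FPTAS}$.

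For $\V(k,n)$, I would use the map $\rho:\V(k,n)\to\Gr_\pi(k,n)$, $Y\mapsto YY^\tp$. It is computable with one matrix product, hence polynomial-time, and it is surjective: every orthonormal $k$-frame spans a $k$-plane whose orthogonal projector is $YY^\tp$. Taking the quadratic objective $f(P)=\sum_{(i,j)\in E}p_{ii}p_{jj}$ from Proposition~\ref{prop:cli2} and noting $p_{ii}=e_i^\tp YY^\tp e_i=\sum_{l=1}^k y_{il}^2$, the pullback $f\circ\rho$ is the polynomial $\sum_{(i,j)\in E}\bigl(\sum_{l=1}^k y_{il}^2\bigr)\bigl(\sum_{m=1}^k y_{jm}^2\bigr)$ in the entries of $Y$, homogeneous of degree four, whose coefficients are just copies of those of $f$ (cf.\ Example~\ref{eg:quad}); so $f\mapsto f\circ\rho$ is a polynomial-time transformation. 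Since $\rho$ is surjective, $(f\circ\rho)_{\max}=f_{\max}$ and $(f\circ\rho)_{\min}=f_{\min}$, and from an $\varepsilon$-approximate maximizer $Y_\varepsilon$ of $f\circ\rho$ the matrix $P_\varepsilon\coloneqq Y_\varepsilon Y_\varepsilon^\tp\in\Gr_\pi(k,n)$ is an $\varepsilon$-approximate maximizer of $f$, obtained in polynomial time. Thus an $\mathrm{FPTAS}$ polynomial in $n$ for quartic polynomials over $\V(k,n)$ would give one for quadratic polynomials over $\Gr_\pi(k,n)$, contradicting Theorem~\ref{thm:NP2}.

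For $\O(n)$ the subtlety I would flag is that one must \emph{not} write $\O(n)=\V(n,n)$, since that forces $k$ to grow with $n$. Keeping $k$ fixed, I would instead use the surjection $\sigma:\O(n)\to\Gr_\pi(k,n)$, $Q\mapsto Q\begin{bsmallmatrix} I_k & 0\\ 0 & 0\end{bsmallmatrix}Q^\tp$ (this is the composite $\varphi_2\circ\varphi_1$ of Proposition~\ref{prop:redu} read on matrix representatives). It is polynomial-time computable and surjective because every orthonormal $k$-frame extends to an orthonormal basis of $\mathbb{R}^n$. The pullback $f\circ\sigma$ of the same $f$ is $\sum_{(i,j)\in E}\bigl(\sum_{l=1}^k q_{il}^2\bigr)\bigl(\sum_{m=1}^k q_{jm}^2\bigr)$, a degree-four polynomial in the entries of $Q$ with coefficients copied from those of $f$. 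Exactly the argument of the previous paragraph then shows that an $\mathrm{FPTAS}$ polynomial in $n$ for quartic polynomials over $\O(n)$ yields one for quadratic polynomials over $\Gr_\pi(k,n)$, again contradicting Theorem~\ref{thm:NP2}.

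There is no genuine obstacle here beyond bookkeeping; the only points needing a word of justification are (a) the degree inevitably rises to four under a quadratic parametrization, consistent with the fact that quadratic programming over $\V(1,n)=\mathrm{S}^{n-1}$ is already easy, so one cannot hope to do better uniformly in $k$ this way; (b) the $\O(n)$ case requires the fixed-rank idempotent parametrization $\sigma$ rather than $k=n$, its surjectivity being the frame-extension fact; and (c) surjectivity of $\rho$ and $\sigma$ is what makes optima and the quantity $f_{\max}-f_{\min}$ transfer verbatim, while polynomial-time computability of $\rho$, $\sigma$, and of $f\mapsto f\circ\rho$, $f\mapsto f\circ\sigma$ is what makes the $\mathrm{FPTAS}$ reduction valid under the bit model.
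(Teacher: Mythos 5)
Your proof is correct and follows essentially the same route as the paper: pull back a hard quadratic objective on $\Gr_\pi(k,n)$ along $Y\mapsto YY^\tp$ and $Q\mapsto Q\begin{bsmallmatrix} I_k & 0\\ 0 & 0\end{bsmallmatrix}Q^\tp$ to get quartic polynomials on $\V(k,n)$ and $\O(n)$, then invoke Theorem~\ref{thm:NP2}. Your explicit remarks on surjectivity and the preservation of $f_{\max}-f_{\min}$ are a welcome bit of extra care that the paper leaves implicit.
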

\begin{proof}
Let $\Gr(k,n)$ be the projection model \eqref{eq:iden}. We define the two maps
\[\begin{tikzcd}
	{\O(n)} & {\V(k,n)} & {\Gr(k,n),}
	\arrow["{\pi_1}", from=1-1, to=1-2]
	\arrow["{\pi_2}", from=1-2, to=1-3]
\end{tikzcd}\quad
\pi_1(Q) \coloneqq Q \begin{bmatrix} I_k \\ 0 \end{bmatrix},\quad \pi_2(Y) \coloneqq Y Y^\tp,
\]
noting that right multiplication by $\begin{bsmallmatrix} I_k \\ 0 \end{bsmallmatrix} \in \mathbb{R}^{n \times k}$ gives a matrix in $\V(k,n)$ comprising the first $k$ columns of $Q$. Let $f : \Gr(k,n) \to \mathbb{R}$ be a quadratic polynomial as in \eqref{eq:quad}. 
Then the functions $f\circ \pi_2 : \V(k,n) \to \mathbb{R}$ and $f \circ \pi_2 \circ \pi _1 : \O(n) \to \mathbb{R}$ are given by
\[
f\circ \pi_2(Y) = f(YY^\tp), \quad f \circ \pi_2 \circ \pi _1 (Q) = f\biggl(Q\begin{bmatrix} I_k & 0 \\ 0 & 0 \end{bmatrix} Q^\tp\biggr),
\]
i.e., quartic polynomial functions in $Y$ and $Q$ respectively. In particular $f \circ \pi_2 (Y)$ is given by the same expression in \eqref{eq:quadY} and  $f \circ \pi_2 \circ \pi_1$ is given by a nearly identical expression (with $q_{ij}$ in place of $y_{ij}$). By Theorem~\ref{thm:NP2}, we deduce that there is no $\mathrm{FPTAS}$ for maximizing quartic polynomials over $\V(k, n)$ and $\O(n)$.
\end{proof}

It is not surprising that quartic programming is NP-hard, given that it is already NP-hard over $\mathbb{R}^n$ \cite{MK}, as well as in a variety of other scenarios \cite{JLZ}. So it remains to find out if unconstrained \emph{cubic} programming on $\V(k,n)$ is NP-hard. We will show that it is but this cannot be easily deduced from any of our results up to this point --- as we saw in the proof above, these give quartic programs on $\V(k,n)$.

We will instead use a celebrated result of Nesterov \cite{nesterov2003}, which shows that $\alpha(G)$, the \emph{stability number} of an $(n-1)$-vertex undirected graph $G$, may be expressed as the maximum of a homogenous cubic polynomial $f$ over the $(n-1)$-sphere $\mathrm{S}^{n-1}$.  We will not need to know the actual expression for $f$ nor the definition of stability number,  just that
\begin{equation}\label{eqn:Nesterov}
\max_{\lVert x \rVert = 1} f(x) = \sqrt{1-\frac{1}{\alpha(G)}}
\end{equation}
and that this implies there is no $\mathrm{FPTAS}$ for maximizing $f$ over $\mathrm{S}^{n-1}$.
\begin{theorem}[Stiefel and orthogonal cubic programming is NP-hard]\label{thm:VO}
Let $k \in \mathbb{N}$ be fixed. Unless $\mathrm{P} = \mathrm{NP}$, there is no $\mathrm{FPTAS}$ that is polynomial in $n$ for maximizing cubic polynomials over $\V(k, n)$ or $\O(n)$.
\end{theorem}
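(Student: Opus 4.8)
The plan is to reduce directly from the hard instance behind \eqref{eqn:Nesterov}, exploiting the elementary fact that a single column of an orthonormal $k$-frame, or of an orthogonal matrix, ranges over the entire sphere. Fix $k$ and restrict attention to $n \ge k$, which loses nothing since NP-hardness is asymptotic. Given an $(n-1)$-vertex graph $G$ and Nesterov's associated homogeneous cubic $f : \mathrm{S}^{n-1} \to \mathbb{R}$ with $\max_{\lVert x \rVert = 1} f(x) = \sqrt{1 - 1/\alpha(G)}$, I would introduce the first-column projections
\[
\rho_{\V} : \V(k,n) \to \mathrm{S}^{n-1},\quad Y \mapsto Y e_1, \qquad \rho_{\O} : \O(n) \to \mathrm{S}^{n-1},\quad Q \mapsto Q e_1.
\]
Both have entries that are linear in the matrix variable, and both are surjective: any unit vector $x \in \mathbb{R}^n$ can be completed — by Gram--Schmidt, or a single Householder reflector — to an orthonormal $k$-frame, respectively to an orthogonal matrix, whose first column is $x$ (here $n \ge k$ is used to guarantee there is room for the remaining $k-1$ columns).

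Next I would pull $f$ back along these maps. Since $f$ is cubic in $x$ and $x = Y e_1$ (resp.\ $x = Q e_1$) is linear in the matrix entries, $f \circ \rho_{\V}$ is a cubic polynomial on $\V(k,n)$ and $f \circ \rho_{\O}$ a cubic polynomial on $\O(n)$, whose rational coefficients are exactly the coefficients of $f$ placed in front of the appropriate monomials in the $y_{ij}$ (resp.\ $q_{ij}$), with all remaining coefficients zero. So the map $f \mapsto f \circ \rho_{\V}$ (resp.\ $f \mapsto f \circ \rho_{\O}$) is a constant-time transformation and the instance size grows only polynomially, making this a legitimate reduction in the bit model. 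Surjectivity of $\rho_{\V}$ and $\rho_{\O}$ gives $\max f\circ\rho_{\V} = \max f\circ\rho_{\O} = \max_{\lVert x\rVert=1} f(x)$ and, equally important for Definition~\ref{def:FPTAS}, $\min f\circ\rho_{\V} = \min f\circ\rho_{\O} = \min_{\lVert x\rVert=1} f(x)$; hence if $Y_\varepsilon$ (resp.\ $Q_\varepsilon$) is a $(1-\varepsilon)$-approximate maximizer of the pulled-back cubic, then $\rho_{\V}(Y_\varepsilon) = Y_\varepsilon e_1$ (resp.\ $Q_\varepsilon e_1$) is a $(1-\varepsilon)$-approximate maximizer of $f$ over $\mathrm{S}^{n-1}$.

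Combining these, an $\mathrm{FPTAS}$ polynomial in $n$ for maximizing cubic polynomials over $\V(k,n)$ (resp.\ $\O(n)$), precomposed with $f \mapsto f\circ\rho_{\V}$ (resp.\ $f\mapsto f\circ\rho_{\O}$) and postcomposed with $Y_\varepsilon \mapsto Y_\varepsilon e_1$ (resp.\ $Q_\varepsilon \mapsto Q_\varepsilon e_1$), would be an $\mathrm{FPTAS}$ polynomial in $n$ for maximizing $f$ over $\mathrm{S}^{n-1}$, contradicting \eqref{eqn:Nesterov} unless $\mathrm{P} = \mathrm{NP}$. The only items needing care are bookkeeping ones: the sphere $\mathrm{S}^{n-1} = \V(1,n)$ must be the sphere in $\mathbb{R}^n$ with the same $n$ as $\V(k,n)$ and $\O(n)$ — which is precisely why Nesterov's graph is taken on $n-1$ vertices — and the FPTAS guarantee being stated relative to $f_{\max}-f_{\min}$ forces us to check that $f_{\min}$ is preserved, which again is immediate from surjectivity. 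I do not expect a genuine obstacle: unlike the Grassmannian quotient models of Section~\ref{sec:other}, the maps $\rho_{\V}$ and $\rho_{\O}$ are plain coordinate projections, so no QR or eigendecomposition enters and there is no degree blow-up.
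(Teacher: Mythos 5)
Your proposal is correct and is essentially the paper's own proof: both pull back Nesterov's cubic along the first-column projection $Y \mapsto Ye_1$ (resp.\ $Q \mapsto Qe_1$) and invoke the surjectivity of this map onto $\mathrm{S}^{n-1}$. The paper states this more tersely, leaving the surjectivity, the preservation of $f_{\min}$, and the bit-model bookkeeping implicit, but the reduction is identical.
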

\begin{proof}
Let $e_1 \in \mathbb{R}^n$ be the first standard basis vector. Then $\pi_1: \V(k,n) \to \mathrm{S}^{n-1}$, $Y \mapsto Ye_1$, takes $Y$ to its first column vector. Let $f : \mathrm{S}^{n-1} \to \mathbb{R}$ be a polynomial function.  Then $f \circ \pi_1 : \V(k,n) \to \mathbb{R}$ is a polynomial of the same degree.  If $f$ is Nesterov's cubic polynomial in \eqref{eqn:Nesterov},  then $f \circ \pi_1$ is a cubic polynomial on $\V(k,n)$ with the desired property.  For $\O(n)$, note that same argument also applies to  $\pi_1: \O(n) \to \mathrm{S}^{n-1}$, $Q \mapsto Qe_1$.
\end{proof}

Nesterov's construction also yields an independent proof of the NP-hardness result for $\Gr(1, n)$. We will record it below given that this is possibly the most important Grassmannian: As an abstract manifold, it is the $(n-1)$-dimensional real projective space $\mathbb{RP}^{n-1}$; and as we saw in Section~\ref{sec:onlyn}, it may also be regarded as the set of density matrices of pure states.
\begin{corollary}[Grassmannian quadratic programming is NP-hard III]
Unless $\mathrm{P} = \mathrm{NP}$, there is no $\mathrm{FPTAS}$ that is polynomial in $n$ for maximizing quadratic polynomials over $\Gr(1,n)$.
\end{corollary}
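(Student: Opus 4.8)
The plan is to combine the standard double cover $\mathrm{S}^{n} \to \mathbb{RP}^{n} = \Gr(1, n+1)$, $z \mapsto zz^\tp$, with Nesterov's cubic from \eqref{eqn:Nesterov}. Given an $(n-1)$-vertex undirected graph $G$, let $f$ be Nesterov's homogeneous cubic polynomial in the variables $x_1, \dots, x_n$, with rational coefficients, for which $\max_{\lVert x \rVert = 1} f(x) = \sqrt{1 - 1/\alpha(G)}$. Because $f$ is homogeneous of \emph{odd} degree, $f(-x) = -f(x)$, so $f$ does not descend to $\mathbb{RP}^{n-1}$; the remedy is to adjoin a fresh coordinate $t$ and put $g(x, t) \coloneqq t\, f(x)$ on $\mathbb{R}^{n+1}$. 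Now $g$ is a homogeneous quartic that \emph{is} even: $g(-x, -t) = (-t)(-f(x)) = g(x,t)$. Writing $f(x) = \sum c_{abc}\, x_a x_b x_c$ after collecting terms, a generic monomial of $g$ is $c_{abc}\, t\, x_a x_b x_c = c_{abc}\,(t x_a)(x_b x_c)$; hence with $z = (x, t) \in \mathbb{R}^{n+1}$ and $P = zz^\tp$, so that $p_{ij} = z_i z_j$, we have $g(z) = \tilde g(zz^\tp)$ where $\tilde g(P) \coloneqq \sum c_{abc}\, p_{n+1,\,a}\, p_{bc}$ is a \emph{quadratic} polynomial in the entries of $P$, with rational coefficients, obtained from $f$ in linear time. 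Restricted to $P \in \Gr(1, n+1)$, i.e.\ $P = zz^\tp$ with $\lVert z \rVert = 1$, this gives $\tilde g(P) = g(z)$.

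The next step is the optimal-value computation. Writing a unit vector as $z = (\sqrt{1 - t^2}\, u,\; t)$ with $t \in [-1,1]$ and $u \in \mathrm{S}^{n-1}$, homogeneity of $f$ gives $g(z) = t(1-t^2)^{3/2} f(u)$; therefore
\[
\max_{P \in \Gr(1, n+1)} \tilde g(P) \;=\; \max_{\lVert z \rVert = 1} g(z) \;=\; \Bigl(\max_{t \in [-1,1]} \lvert t \rvert (1 - t^2)^{3/2}\Bigr) \sqrt{1 - \tfrac{1}{\alpha(G)}} \;=\; \frac{3\sqrt{3}}{16}\sqrt{1 - \frac{1}{\alpha(G)}},
\]
using $\max_{u}(-f(u)) = \max_u f(u) = \sqrt{1 - 1/\alpha(G)}$ (valid since $f$ is homogeneous of odd degree) and the elementary fact that $t(1-t^2)^{3/2}$ attains its maximum $\tfrac{3\sqrt{3}}{16}$ over $[0,1]$ at $t = \tfrac12$. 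An identical computation gives $\min_{P \in \Gr(1,n+1)} \tilde g(P) = -\tfrac{3\sqrt{3}}{16}\sqrt{1 - 1/\alpha(G)}$, so the spread $\tilde g_{\max} - \tilde g_{\min} \le \tfrac{3\sqrt{3}}{8}$ is bounded by an absolute constant.

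Finally, suppose for contradiction that there is an $\mathrm{FPTAS}$ polynomial in $n$ for maximizing quadratic polynomials over $\Gr(1,n)$, and apply it to $\tilde g$ over $\Gr(1, n+1)$; the shift by one in the dimension is immaterial since NP-hardness is asymptotic. As $G$ ranges over $(n-1)$-vertex graphs, $\alpha(G)$ takes values in $\{1, \dots, n-1\}$, so $\tilde g_{\max}$ is one of the finitely many numbers $\tfrac{3\sqrt3}{16}\sqrt{1 - 1/j}$, $j = 1, \dots, n-1$, and consecutive ones differ by at least $\tfrac{3\sqrt3}{16}\bigl(\sqrt{1 - 1/(j{+}1)} - \sqrt{1 - 1/j}\bigr) \ge \tfrac{3\sqrt3}{32}\cdot \tfrac{1}{j(j+1)} \ge \tfrac{3\sqrt3}{32 n^2}$, using $\sqrt a - \sqrt b \ge \tfrac12(a - b)$ for $0 \le b \le a \le 1$. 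Taking $\varepsilon = \tfrac{1}{16 n^2}$, which is polynomial in the input size, the $\mathrm{FPTAS}$ returns a $P_\varepsilon \in \Gr(1, n+1)$ with $\tilde g(P_\varepsilon) \ge \tilde g_{\max} - \varepsilon(\tilde g_{\max} - \tilde g_{\min}) > \tilde g_{\max} - \tfrac12 \cdot \tfrac{3\sqrt3}{32 n^2}$, i.e.\ strictly within half a gap of $\tilde g_{\max}$; rounding $\tilde g(P_\varepsilon)$ to the nearest admissible value then pins down $\alpha(G)$ exactly in polynomial time, contradicting the NP-hardness of the stability number. I expect the only points demanding care to be the monomial bookkeeping certifying that $\tilde g$ is genuinely quadratic in the $p_{ij}$ — the parity mismatch is precisely why one multiplies by a fresh $t$ rather than squaring $f$ — the rationality and polynomial bit-size of Nesterov's coefficients so that the bit-complexity model applies, and the separated maximization displayed above; none of these is a genuine obstacle.
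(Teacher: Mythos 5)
Your proposal is correct and follows essentially the same route as the paper: both multiply Nesterov's odd cubic $f$ by a fresh coordinate to obtain the even quartic $g(x,t)=f(x)\,t$ on the sphere, rewrite it as a quadratic in the entries of $P=zz^\tp\in\Gr(1,n+1)$, and factor the maximum as a positive constant times $\max_{\lVert x\rVert=1}f(x)$. The only differences are cosmetic --- you compute the constant $\tfrac{3\sqrt{3}}{16}$ explicitly and spell out the $\mathrm{FPTAS}$ gap argument that the paper leaves implicit.
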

\begin{proof}
Again we will use the projection model in \eqref{eq:iden} for $\Gr(1,n)$.  Recalling that we write vectors as column vectors, let
\[
\pi: \mathrm{S}^{n-1} \to \Gr(1,n),\quad \pi(x,y) = \begin{bmatrix}
x \\
y
\end{bmatrix} \begin{bmatrix}
x^\tp & y
\end{bmatrix} = \begin{bmatrix}
x x^\tp & yx  \\
y x^\tp  & y^2
\end{bmatrix},
\]
where $x \in \mathbb{R}^{n-1}$ satisfies
\[
\biggl\lVert \begin{bmatrix} x \\ y \end{bmatrix} \biggr\rVert^2 \coloneqq x_1^2 + \dots + x_{n-1}^2 + y^2 = 1. 
\]
Let $f$ be Nesterov's cubic polynomial in \eqref{eqn:Nesterov} and define the optimization problem over $\mathrm{S}^{n-1}$,
\[
\max_{\lVert x \rVert^2 + y^2 = 1} g(x,y) \coloneqq \max_{\lVert x \rVert^2 + y^2 = 1}  f(x)y.
\]
So $g$ is a homogeneous quartic polynomial in the vector variable $\begin{bsmallmatrix} x \\ y \end{bsmallmatrix}$. But observe that we may also write $g$ in the form of a homogeneous quadratic polynomial $h$ in the matrix variable $P =  \begin{bsmallmatrix}
x x^\tp & yx  \\
y x^\tp  & y^2
\end{bsmallmatrix}$ since $\pi$ is surjective. In other words, we have $h \circ \pi = g$ and therefore
\begin{align*}
\max_{P\in \Gr(1,n)} h(P) =  \max_{\|(x, y)\| = 1} f(x)y &= \max_{\lVert x \rVert\le 1} f(x)\sqrt{1-\lVert x \rVert^2} = \max_{\lVert x \rVert = 1,\; 0\le  t \le 1} f(tx)\sqrt{1-t^2} \\
&= \max_{\lVert x \rVert = 1,\; 0\le  t \le 1} f(x)t^3\sqrt{1-t^2} = c\max_{\lVert x \rVert = 1} f(x)
\end{align*}
for some constant $c > 0$. This shows that there is no $\mathrm{FPTAS}$ for maximizing quadratic polynomials over $\Gr (1,n)$.
\end{proof}

We establish an analogue of Proposition~\ref{prop:redu} in order to show that the NP-hardness results above apply to the quotient model of the Stiefel manifold and to the noncompact Stiefel manifold.
\begin{lemma}\label{lem:stiefel}
There exist polynomial-time computable diffeomorphisms $\psi_1:  \O(n)/\O(n-k) \to \V(k,n)$ and $\psi_2:  \GL(n)/\P_1(k,n) \to \St(k,n)$.
\end{lemma}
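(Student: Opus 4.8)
The plan is to construct the two diffeomorphisms explicitly together with their inverses, exactly as was done for $\varphi_1,\dots,\varphi_5$ in Proposition~\ref{prop:redu}, and then invoke the same polynomial-time complexity bounds for matrix products and QR/eigenvalue decompositions.

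For $\psi_1 : \O(n)/\O(n-k) \to \V(k,n)$, the natural choice is
\[
\psi_1(\lb Q \rb_{\O}) = Q \begin{bmatrix} I_k \\ 0 \end{bmatrix},
\]
i.e.\ take the first $k$ columns of any representative $Q \in \O(n)$; this is well-defined because right multiplication of $Q$ by $\begin{bsmallmatrix} I_k \\ 0 \end{bsmallmatrix}$ only sees the first $k$ columns of $Q$, and the $\O(n-k)$-action (acting on the \emph{last} $n-k$ columns) leaves them untouched. For the inverse, given $Y \in \V(k,n)$ one completes $Y$ to an orthogonal matrix $[\,Y \mid Y_\perp\,] \in \O(n)$ — concretely via a QR decomposition of $Y$ padded out, or by the same eigenvector trick used for $\varphi_1$ applied to $YY^\tp$ — and sets $\psi_1^{-1}(Y) = \lb [\,Y \mid Y_\perp\,] \rb_{\O}$. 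One checks $\psi_1^{-1}\circ\psi_1 = \mathrm{id}$ and $\psi_1\circ\psi_1^{-1} = \mathrm{id}$ just as in Proposition~\ref{prop:redu}, the key fact being that two orthogonal completions of the same $Y$ differ by right multiplication by an element of $\O(n-k)$. Smoothness and the fact that $d\psi_1$ is an isomorphism everywhere follow because $\psi_1$ is induced by the smooth submersion $\O(n)\to\V(k,n)$, $Q\mapsto Q\begin{bsmallmatrix}I_k\\0\end{bsmallmatrix}$, whose fibers are precisely the $\O(n-k)$-orbits; since $\psi_1$ is also a bijection it is a diffeomorphism by \cite[Theorem~4.14]{Lee13}.

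For $\psi_2 : \GL(n)/\P_1(k,n) \to \St(k,n)$, the analogous map is
\[
\psi_2(\lb X \rb_{\GL}) = X \begin{bmatrix} I_k \\ 0 \end{bmatrix},
\]
again the first $k$ columns. Recall $\P_1(k,n)$ consists of block upper-triangular invertible matrices with $X_1 = I_k$; as computed in Section~\ref{sec:poly} (the orbit description there), right multiplication by such a matrix alters only the last $n-k$ columns of $X$ and leaves the first $k$ unchanged, so $\psi_2$ is well-defined, and since $X \in \GL(n)$ its first $k$ columns are linearly independent, landing in $\St(k,n)$. Conversely, given $S \in \St(k,n)$ of full column rank, extend it to an invertible matrix $[\,S \mid S'\,] \in \GL(n)$ (e.g.\ by completing to a basis — one can do this with a QR-type step or by Gaussian elimination on $S^\tp$) and set $\psi_2^{-1}(S) = \lb [\,S \mid S'\,] \rb_{\GL}$. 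Two such completions of the same $S$ differ by right multiplication by a matrix in $\P_1(k,n)$, so this is well-defined and inverse to $\psi_2$. Smoothness, bijectivity, and the isomorphism property of $d\psi_2$ follow by the same reasoning as for $\psi_1$ (the map $\GL(n)\to\St(k,n)$, $X\mapsto X\begin{bsmallmatrix}I_k\\0\end{bsmallmatrix}$, is a smooth submersion with fibers the $\P_1(k,n)$-orbits), so $\psi_2$ is a diffeomorphism.

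Finally, polynomial-time computability: $\psi_1$ and $\psi_2$ are just column selections, hence trivially computable; $\psi_1^{-1}$ requires an orthogonal completion, which is a QR decomposition, and $\psi_2^{-1}$ requires a basis completion, both of which run in $\mathcal{O}(n^\omega)$ time as cited in Proposition~\ref{prop:redu}. The main obstacle — and it is a mild one — is the same caveat flagged after Proposition~\ref{prop:redu} and in the footnote to Corollary~\ref{cor:St}: the inverse maps involve QR-type decompositions with square roots and branching, so passing from a polynomial $f$ on $\V(k,n)$ or $\St(k,n)$ to $f\circ\psi_1$ or $f\circ\psi_2^{-1}$ as \emph{functions} is immediate, but there is no claim of a rational change-of-coordinates at the level of defining parameters; for the NP-hardness transfer one should therefore work in the oracle model for $\psi_2^{-1}$, exactly as in Corollary~\ref{cor:St}, while $\psi_1^{-1}$ on the compact side can be handled more directly as in Corollary~\ref{cor:poly}.
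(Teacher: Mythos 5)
Your proposal is correct and follows the same overall architecture as the paper's proof: identical forward maps (take the first $k$ columns of a representative), well-definedness from the fact that the right action of $\O(n-k)$ resp.\ $\P_1(k,n)$ only disturbs the last $n-k$ columns, the submersion-plus-bijectivity argument for the diffeomorphism property, and the standard $\mathcal{O}(n^\omega)$ bounds for computability. The one genuine difference is in how you realize the inverses. The paper takes $\psi_1^{-1}(Y)=\lb Q_Y\rb$ with $Q_Y$ the eigenvector matrix of $YY^\tp$, and $\psi_2^{-1}(S)=\lb L_S\rb$ with $L_S$ the unit lower-triangular factor in an $LDL^\tp$ decomposition of $SS^\tp$; you instead complete $Y$ to $[\,Y\mid Y_\perp\,]\in\O(n)$ and $S$ to $[\,S\mid S'\,]\in\GL(n)$, observing that two completions differ by right multiplication by an element of the acting group. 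Your route buys a little: the identity $\psi_i\circ\psi_i^{-1}=\mathrm{id}$ is immediate because the first $k$ columns of the completion are literally $Y$ (resp.\ $S$), whereas the paper's constructions require an additional normalization of the eigenvector matrix (its first $k$ columns a priori only span $\im(Y)$) and, for the $LDL^\tp$ route, some care with degenerate leading principal submatrices; the cost is the same, since orthogonal/basis completion is again a QR-type computation. Your closing caveat about the bit versus oracle model matches the paper's own usage (cf.\ Corollary~\ref{cor:NPSt}, which is stated in the oracle model precisely because its reduction passes through a QR step).
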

\begin{proof}
For the quotient manifold $\O(n)/\O(n-k)$, we regard $\O(n-k)$ as the subgroup
\[
\biggl\{ \begin{bmatrix}
I_k & 0 \\
0 & Q_2
\end{bmatrix} \in \O(n) : Q_2 \in \O(n-k) \biggr\}.
\]
With this convention, the map below is well-defined
\[
\psi_1:  \O(n)/\O(n-k) \to \V(k,n),\quad \psi_1(\lb Q \rb) = Q \begin{bmatrix} I_k \\ 0 \end{bmatrix}
\]
and invertible. The inverse map is
\[
\psi_1^{-1}:  \V(k,n) \to  \O(n)/\O(n-k),\quad \psi_1^{-1}(Y) = \lb Q_Y \rb,
\]
where $Q_Y$ is the eigenvector matrix in the eigenvalue decomposition $Y Y^\tp = Q_Y \begin{bsmallmatrix}
I_k & 0 \\
0 & 0 
\end{bsmallmatrix} Q_Y^\tp$, and although $Q_Y$ is not uniquely determined, its equivalence class $ \lb Q_Y \rb$ is. Hence  $\psi_1^{-1}$ is also well-defined.  It is routine to verify that $\psi_1^{-1}$ is indeed the inverse of $\psi_1$. The same argument in the proof of Proposition~\ref{prop:redu} shows that $\psi_1$ is a diffeomorphism.  Computation of $\psi_1$ is simply taking a submatrix while computation of $\psi^{-1}_1$ involves eigenvalue decomposition, but both are clearly computable in polynomial-time.

In the last paragraph, $\lb \, \cdot \, \rb=\lb \, \cdot \, \rb_{\O}$; in this paragraph $\lb \, \cdot \, \rb=\lb \, \cdot \, \rb_{\GL}$. We define
\begin{alignat*}{2}
\psi_2 &:  \GL(n)/\P_1(k,n) \to \St(k,n), & \psi_2(\lb X \rb) &= S \begin{bmatrix} I_k \\ 0 \end{bmatrix},  \\
\psi^{-1}_2 &: \St(k,n)   \to \GL(n)/\P_1(k,n),\quad &\psi^{-1}_2(S) &= \lb L_S \rb,  
\end{alignat*}
where $L_S \in \GL(n)$ is a unit lower-triangular matrix in an $LDL^\tp$ decomposition 
\[
S S^\tp = L_S D L_S^\tp
\]
with $D \in \mathbb{R}^{n\times n}$ a nonnegative diagonal matrix. Although $L_S$ is not unique, but different choices differ by an element in $\P_1(k,n)$, and thus $\psi_2^{-1}$ is well-defined. Again it is routine to verify that $\psi_2^{-1}$ is the inverse of $\psi_2$ and that $\psi_2$ is a diffeomorphism.  Computation of $\psi_2$ is simply taking a submatrix while computation of $\psi^{-1}_2$ involves $LDL^\tp$ decomposition, both computable in polynomial-time.
\end{proof}

We may now deduce that optimization over the quotient model of the compact Stiefel manifold is also NP-hard in the bit model.
\begin{corollary}[Compact Stiefel optimization is NP-hard]
Let $k \in \mathbb{N}$ be fixed. Unless $\mathrm{P} = \mathrm{NP}$, there is no $\mathrm{FPTAS}$ that is polynomial in $n$ for maximizing polynomial functions over $\O(n)/\O(n-k) $.
\end{corollary}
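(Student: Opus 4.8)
The plan is to reduce to Theorem~\ref{thm:VO} via the diffeomorphism $\psi_1 : \O(n)/\O(n-k) \to \V(k,n)$ of Lemma~\ref{lem:stiefel}, mirroring the proof of Theorem~\ref{thm:NP3}. What makes the argument go through in the bit model --- rather than merely the oracle model, as in Corollary~\ref{cor:St} --- is that $\psi_1$ is induced by a \emph{linear} map on ambient matrix space: $\psi_1(\lb Q \rb) = Q \begin{bsmallmatrix} I_k \\ 0 \end{bsmallmatrix}$ simply extracts the first $k$ columns of a representative, introducing no square roots or branching, and we never need $\psi_1^{-1}$ (which does involve an eigendecomposition) on the objective side.

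It suffices to exhibit one family of polynomial functions on $\O(n)/\O(n-k)$ admitting no $\mathrm{FPTAS}$ polynomial in $n$. I would start from $f = g \circ \pi_1$, the cubic polynomial on $\V(k,n)$ built in the proof of Theorem~\ref{thm:VO} from Nesterov's cubic $g$ on $\mathrm{S}^{n-1}$ and $\pi_1 : \V(k,n) \to \mathrm{S}^{n-1}$, $Y \mapsto Ye_1$; by that theorem it has no such $\mathrm{FPTAS}$ unless $\mathrm{P} = \mathrm{NP}$. Pulling it back, set $\bar f(Q) \coloneqq f\bigl(Q \begin{bsmallmatrix} I_k \\ 0 \end{bsmallmatrix}\bigr)$ for $Q \in \O(n)$. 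One checks that $\bar f$ is a polynomial function on $\O(n)$ of the same degree as $f$, whose coefficients are exactly those of $f$ under the relabeling $y_{ij} \mapsto q_{ij}$, and that $\bar f$ is invariant under $\bigl\{ \begin{bsmallmatrix} I_k & 0 \\ 0 & Q_2 \end{bsmallmatrix} : Q_2 \in \O(n-k) \bigr\}$ because $\begin{bsmallmatrix} I_k & 0 \\ 0 & Q_2 \end{bsmallmatrix} \begin{bsmallmatrix} I_k \\ 0 \end{bsmallmatrix} = \begin{bsmallmatrix} I_k \\ 0 \end{bsmallmatrix}$. By Definition~\ref{def:polyquo}, $\bar f$ defines a polynomial function $f \circ \psi_1$ on $\O(n)/\O(n-k)$, and passing from the coefficients of $f$ to the defining parameters of $f \circ \psi_1$ amounts to a relabeling, hence is polynomial-time.

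To close the reduction, observe that since $\psi_1$ is a bijection, $f \circ \psi_1$ and $f$ share the same maximum and the same minimum, so an $\varepsilon$-approximate maximizer $\lb Q_\varepsilon \rb \in \O(n)/\O(n-k)$ of $f \circ \psi_1$ yields, upon taking the first $k$ columns of any representative, an $\varepsilon$-approximate maximizer $Q_\varepsilon \begin{bsmallmatrix} I_k \\ 0 \end{bsmallmatrix} \in \V(k,n)$ of $f$. Hence an $\mathrm{FPTAS}$ polynomial in $n$ for maximizing polynomial functions over $\O(n)/\O(n-k)$ would yield one for maximizing $f$ over $\V(k,n)$, contradicting Theorem~\ref{thm:VO}. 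I do not expect a genuine obstacle here; the only step requiring care is confirming that $\bar f$ is a bona fide polynomial function on the quotient in the sense of Definition~\ref{def:polyquo}, i.e.\ the $\O(n-k)$-invariance, which is immediate from the block computation above.
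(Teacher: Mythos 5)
Your proposal is correct and follows essentially the same route as the paper: pull the hard cubic $f$ on $\V(k,n)$ from Theorem~\ref{thm:VO} back along $\psi_1(\lb Q\rb)=Q\begin{bsmallmatrix} I_k\\0\end{bsmallmatrix}$, note that the defining parameters of $f\circ\psi_1$ are just the coefficients of $f$ (so the reduction is a bit-model reduction), and derive a contradiction with Theorem~\ref{thm:VO}. Your extra checks (the $\O(n-k)$-invariance making $f\circ\psi_1$ a polynomial function in the sense of Definition~\ref{def:polyquo}, and the matching of maxima via bijectivity) are details the paper leaves implicit, not a different argument.
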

\begin{proof}
Let $f : \V(k,n) \to \mathbb{R}$ be a polynomial function.
The value of $f \circ \psi_1$ on $\lb Q \rb \in \O(n)/\O(k)$ is given by $f\bigl(Q\begin{bsmallmatrix} I_k \\ 0 \end{bsmallmatrix} \bigr)$ where $Q \in \O(n)$ is any representative of $\lb Q \rb$. In other words the parameters defining the map $f \circ \psi_1$ are the coefficients of $f$ repeated with some multiplicities, as we saw in Example~\ref{eg:quad}.
If there were an  $\mathrm{FPTAS}$ for maximizing polynomial functions on $\O(n)/\O(k)$, then we have an  $\mathrm{FPTAS}$ for maximizing $ f: \V(k,n) \to \mathbb{R}$, contradicting Theorem~\ref{thm:VO}. 
\end{proof}

Finally, we show that optimization over either the submanifold or quotient model of the non-compact Stiefel manifold, or the general linear group, is NP-hard in the oracle model.
\begin{corollary}[Noncompact Stiefel optimization is NP-hard]\label{cor:NPSt}
Let $k \in \mathbb{N}$ be fixed.  Unless $\mathrm{P} = \mathrm{NP}$,  there is no oracle-$\mathrm{FPTAS}$ that is polynomial in $n$ for 
maximizing polynomial functions over $\St(k,n)$, $\GL(n)/\P_1(k,n)$, or $\GL(n)$.
\end{corollary}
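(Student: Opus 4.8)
The plan is to transfer the NP-hardness of cubic programming over $\V(k,n)$ (Theorem~\ref{thm:VO}) to each of the three noncompact models by the same device used in Corollary~\ref{cor:St}: pre-compose the known hard objective with a surjective, polynomial-time computable map landing on $\V(k,n)$, so that an oracle-$\mathrm{FPTAS}$ for the noncompact model, run with the composite supplied as an oracle, would produce an $\varepsilon$-approximate maximizer of the hard objective. We can only assert the \emph{oracle} version of the result --- as opposed to a bit-complexity statement, which is what we obtained in the compact case --- precisely because of the dichotomy on p.~\pageref{it:poly}: the maps we need are polynomial-time computable but not bounded-degree rational maps, so the composite objective is not a polynomial with rational coefficients and must be handed to the algorithm as a black box.

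First I would dispose of $\St(k,n)$. Let $h \colon \V(k,n) \to \mathbb{R}$ be the cubic polynomial furnished by Theorem~\ref{thm:VO} (equivalently, Nesterov's polynomial from \eqref{eqn:Nesterov} composed with the first-column map), for which no $\mathrm{FPTAS}$ exists. Since $\V(k,n) \subseteq \St(k,n)$, I would introduce the retraction
\[
\rho \colon \St(k,n) \to \V(k,n), \qquad \rho(S) = S(S^\tp S)^{-1/2}
\]
(the orthogonal polar factor, equivalently the $Q$-factor of a thin $QR$ decomposition). It is continuous, polynomial-time computable, surjective, and restricts to the identity on $\V(k,n)$; hence $h \circ \rho \colon \St(k,n) \to \mathbb{R}$ takes exactly the same set of values as $h$ on $\V(k,n)$, so that $(h \circ \rho)_{\max} = h_{\max}$, $(h \circ \rho)_{\min} = h_{\min}$, the maximum is attained (at any maximizer of $h$, which lies in $\V(k,n) \subseteq \St(k,n)$ and is fixed by $\rho$), and the approximation gap is unchanged. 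An oracle-$\mathrm{FPTAS}$ for maximizing over $\St(k,n)$, queried on $h \circ \rho$, would return $S_\varepsilon$ with $h(\rho(S_\varepsilon)) \ge h_{\max} - \varepsilon(h_{\max} - h_{\min})$, whence $\rho(S_\varepsilon) \in \V(k,n)$ is an $\varepsilon$-approximate maximizer of $h$ --- contradicting Theorem~\ref{thm:VO}.

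For the remaining two models I would simply insert one more polynomial-time computable surjection on the left. For $\GL(n)/\P_1(k,n)$, use the diffeomorphism $\psi_2 \colon \GL(n)/\P_1(k,n) \to \St(k,n)$ of Lemma~\ref{lem:stiefel} and apply the previous paragraph to $h \circ \rho \circ \psi_2$. For $\GL(n)$ --- which we cannot reach by writing $\GL(n) = \St(n,n)$ since $k$ is fixed --- use the map $X \mapsto X \begin{bsmallmatrix} I_k \\ 0 \end{bsmallmatrix}$ extracting the first $k$ columns, which is linear and surjects onto $\St(k,n)$ because any full-rank $n \times k$ matrix extends to a basis of $\mathbb{R}^n$, and apply the argument to $h \circ \rho$ composed with it. In both cases the composite objective is still polynomial-time evaluable (matrix products, $QR$/symmetric eigendecomposition, and $LDL^\tp$ decomposition are all $\mathcal{O}(n^\omega)$) and inherits the extremal values and the approximation gap of $h$ over $\V(k,n)$, so an oracle-$\mathrm{FPTAS}$ would again contradict Theorem~\ref{thm:VO}. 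The one point to be careful about --- and the only real obstacle --- is the one already flagged: $\rho$, $\psi_2^{-1}$, and their relatives involve root extraction and branching, so the composite objectives are genuinely non-polynomial; this is exactly why the black-box model is the natural setting, and also why one must verify (as above) that the surjectivity of each left-multiplied map means no function value is created or destroyed, so that the $(1-\varepsilon)$-approximation guarantee of Definition~\ref{def:FPTAS} transfers verbatim.
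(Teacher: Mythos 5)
Your proof is correct and takes essentially the same route as the paper's: both reduce from Theorem~\ref{thm:VO} by pre-composing the hard cubic on $\V(k,n)$ with polynomial-time computable surjections, handing the (non-polynomial) composites to the algorithm as oracles. The only cosmetic differences are your use of the polar factor $S(S^\tp S)^{-1/2}$ where the paper uses the QR-based map $\theta_2$ (both are valid retractions, though they are not literally the same map), and your direct column-extraction map $X \mapsto X \begin{bsmallmatrix} I_k \\ 0 \end{bsmallmatrix}$ for $\GL(n)$, which coincides with the paper's composite $\psi_2 \circ \theta_1$.
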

\begin{proof}
Let $\psi_2 : \GL(n)/\P_1(k,n) \to \St(k,n)$ be as defined in Lemma~\ref{lem:stiefel}. Let  $\theta_1 : \GL(n) \to \GL(n)/\P_1(k,n)$ be the quotient map taking a matrix to its coset in the quotient group. Let $\theta_2 :  \St(k,n) \to \V(k,n)$ be given by $\theta_2(S)  = Y_S $, where $Y_S \in \V(k,n)$ is the unique orthogonal factor in the condensed QR-decomposition $S = Y_S R_S$ in which $R_S \in \mathbb{R}^{k \times k}$ is upper triangular with positive diagonal. These three maps fit into a sequence:
\[
\GL(n) \xrightarrow{\theta_1} \GL(n)/\P_1(k,n) \xrightarrow{\psi_2} \St(k,n) \xrightarrow{\theta_2} \V(k,n).
\] 
Now let $f \circ \pi_1 : \V(k,n) \to \mathbb{R}$ be the cubic polynomial constructed in the proof of Theorem~\ref{thm:VO}.  Then
\begin{equation}\label{eq:three}
\begin{aligned}
f \circ \pi_1  \circ \theta_2 &: \St(k,n) \to \mathbb{R},\\
f \circ \pi_1  \circ \theta_2 \circ \psi_2 &: \GL(n)/\P_1(k,n) \to \mathbb{R},\\
f \circ \pi_1  \circ \theta_2 \circ \psi_2 \circ \theta_1 &:  \GL(n) \to \mathbb{R}
\end{aligned}
\end{equation}
are all polynomial function that can be computed in the same time complexity as $f \circ \pi_1$, up to a polynomial time factor. The required conclusion than follows from the nonexistence of $\mathrm{FPTAS}$ for maximizing over $\V(k,n)$ in Theorem~\ref{thm:VO}.
\end{proof}
Note that while Theorem~\ref{thm:VO} is proved for the bit model, Corollary~\ref{cor:NPSt} is proved in the oracle model. The reason is as in the commentary after the proof of Corollary~\ref{cor:St} --- note that $\theta_2$ appears in every function in \eqref{eq:three}, and evaluating $\theta_2$ involves a QR decomposition.

\section{NP-hardness of QP on the Cartan manifold}\label{sec:BT}

The set of postive definite matrices $\mathbb{S}^n_\pp$ has two natural metrics --- the standard Euclidean metric $\langle X, Y \rangle = \tr(XY)$ and the Riemannian metric $\mathsf{g}_A (X,Y) = \tr(A^{-1} X A^{-1} Y)$, defined for tangent vectors $X, Y \in \mathbb{S}^n$ and $A \in \mathbb{S}^n_\pp$. The former is rightly called the positive definite \emph{cone} since a cone is, by definition, a subset of an Euclidean space and endowed with the standard Euclidean metric (which is in fact required to define a dual cone). When endowed with the latter metric,   $(\mathbb{S}^n_\pp, \mathsf{g})$ is no longer a cone, but a natural model for $\El(\mathbb{R}^n)$, the set of all centered ellipsoids in $\mathbb{R}^n$, and $\mathsf{g}$ is the unique Riemannian metric that captures the geometry of ellipsoids.

This Riemannian manifold was first discussed by \'Elie Cartan at length in \cite[pp.~364--372]{Cartan1} and more briefly in \cite{Cartan2}. There are many modern expositions, most notably Lang's \cite{Lang} and \cite[Chapter~XII]{LangBook} but also Eberlein's \cite{Eber1,Eber2,EberBook} and Mostow's \cite{Mos} and \cite[Section~3]{MosBook}. All three authors credited \cite{Cartan1, Cartan2}, directly or indirectly, for the most substantive features of this manifold, with Mostow explicitly attributing it to Cartan, and we follow suit in our article.

By way of the following lemma, we will deduce the section title from deciding copositivity of a matrix, one of the oldest NP-hard problems in optimization \cite{MK}.
\begin{lemma}\label{lem:BT}
Let $A \in \mathbb{R}^{n \times n}$ and set
\[
\alpha_{ijkl} \coloneqq 
\begin{cases}
a_{ik} & i =j \text{ and } k = l,\\
0 & \text{otherwise}.
\end{cases}
\]
for all $i,j,k,l \in \{1,\dots,n\}$. We define the quadratic form $f : \mathbb{S}^n \times \mathbb{S}^n \to \mathbb{R}$ by
\[
f(X) = \sum_{i,j,k,l=1}^n \alpha_{ijkl} x_{ij} x_{kl}.
\]
If $f(X) \ge 0$ for all $X \in \mathbb{S}^n_\pp$, then $A$ is copositive.
\end{lemma}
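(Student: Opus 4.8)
The plan is to exploit the fact that the coefficients $\alpha_{ijkl}$ are rigged precisely so that $f(X)$ depends only on the diagonal of $X$. Since $\alpha_{ijkl} = a_{ik}$ exactly when $i = j$ and $k = l$, and vanishes otherwise, the quadruple sum collapses and we would first rewrite
\[
f(X) = \sum_{i,k=1}^n a_{ik}\, x_{ii}\, x_{kk} = \diag(X)^\tp A\, \diag(X).
\]
So the hypothesis ``$f(X) \ge 0$ for all $X \in \mathbb{S}^n_\pp$'' is literally the statement that $\diag(X)^\tp A\, \diag(X) \ge 0$ for every positive definite $X$.

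Next I would identify the image of $\mathbb{S}^n_\pp$ under the diagonal map. On the one hand, if $X \in \mathbb{S}^n_\pp$ then each diagonal entry $x_{ii} = e_i^\tp X e_i$ is strictly positive, so $\diag(X)$ lies in the open positive orthant $\mathbb{R}^n_{>0}$. On the other hand, for any $v = (v_1,\dots,v_n)$ with all $v_i > 0$, the diagonal matrix $\diag(v_1,\dots,v_n)$ is positive definite and has $v$ as its diagonal; hence $\diag(\mathbb{S}^n_\pp) = \mathbb{R}^n_{>0}$ exactly. (This is a trivial instance of the Schur--Horn surjectivity already invoked in Section~\ref{sec:onlyn}.) Consequently the hypothesis is equivalent to $v^\tp A v \ge 0$ for all $v$ with strictly positive coordinates.

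Finally I would pass to the closure of the orthant: any $v \in \mathbb{R}^n$ with nonnegative coordinates is the limit of $v + \tfrac1m \mathbf{1}$ as $m \to \infty$, where $\mathbf{1}$ is the all-ones vector and each $v + \tfrac1m \mathbf 1$ has strictly positive coordinates, so by continuity of the quadratic form $v \mapsto v^\tp A v$,
\[
v^\tp A v = \lim_{m \to \infty} \Bigl(v + \tfrac1m \mathbf 1\Bigr)^\tp A \Bigl(v + \tfrac1m \mathbf 1\Bigr) \ge 0.
\]
This is exactly the assertion that $A$ (equivalently, its symmetric part $\tfrac12(A + A^\tp)$) is copositive. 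The argument presents no serious obstacle; the only point demanding a moment's care is this last limiting step, since copositivity concerns the \emph{closed} nonnegative orthant while diagonals of positive definite matrices fill out only the open one, and it is continuity of the quadratic form that bridges the two.
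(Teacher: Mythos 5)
Your proposal is correct and follows essentially the same route as the paper: collapse the quadruple sum to $\diag(X)^\tp A \diag(X)$, observe that the diagonals sweep out the positive orthant, and pass to the closed nonnegative orthant by continuity. The only cosmetic difference is that the paper takes the closure at the matrix level ($\mathbb{S}^n_\pp$ to $\mathbb{S}^n_\p$, whose diagonals give all of $\mathbb{R}^n_\p$ by Schur--Horn) whereas you take it at the vector level; both are fine.
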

\begin{proof}
This follows from
\[
f(X) = \sum_{i,k=1}^n a_{ik} x_{ii} x_{kk} = x^\tp A x
\]
with $x \coloneqq \diag(X)$. If $f(X) \ge 0$ for all $X \in \mathbb{S}^n_\pp$, then $f(X) \ge 0$ for all $X \in \mathbb{S}^n_\p$. Note that as $X$ runs over $\mathbb{S}^n_\p$, its diagonal $x$ runs over $\mathbb{R}^n_\p$. 
\end{proof}
\begin{corollary}[Cartan quadratic programming is NP-hard]\label{cor:car}
Unconstrained quadratic optimization over $\mathbb{S}^n_\pp$, whether regarded as the positive definite cone or the Cartan manifold, is NP-hard.
\end{corollary}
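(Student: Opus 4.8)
The plan is to reduce from the problem of deciding copositivity of a symmetric rational matrix, shown to be co-NP-complete by Murty and Kabadi~\cite{MK}; equivalently, deciding whether such a matrix \emph{fails} to be copositive is NP-complete. A preliminary observation collapses the two halves of the statement into one: minimizing a polynomial in the entries of $X$ over the set $\mathbb{S}^n_\pp \subseteq \mathbb{R}^{n \times n}$ depends only on that set and on the polynomial, never on which Riemannian metric decorates it. So it suffices to prove NP-hardness once, and it then applies verbatim whether $\mathbb{S}^n_\pp$ is read as the positive definite cone or as the Cartan manifold $(\mathbb{S}^n_\pp,\mathsf{g})$.

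Given a symmetric $A \in \mathbb{Q}^{n \times n}$, I would form the quadratic polynomial $f : \mathbb{S}^n \to \mathbb{R}$ exactly as in Lemma~\ref{lem:BT}; its coefficients $\alpha_{ijkl}$ are copies of entries of $A$, so the construction is polynomial-time and preserves bit-length. The crux is the equivalence
\[
A \text{ is copositive} \iff f(X) \ge 0 \text{ for all } X \in \mathbb{S}^n_\pp .
\]
One direction is immediate from the identity $f(X) = x^\tp A x$ with $x = \diag(X)$, since $\diag(X) \in \mathbb{R}^n_{>0}$ for every $X \in \mathbb{S}^n_\pp$; the other direction is precisely Lemma~\ref{lem:BT}. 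For the decision reduction I would sharpen this to: $A$ is not copositive $\iff$ there is an $X \in \mathbb{S}^n_\pp$ with $f(X) < 0$. If $v \in \mathbb{R}^n_{\ge 0}$ satisfies $v^\tp A v < 0$, then $X_\varepsilon \coloneqq \diag(v) + \varepsilon I_n \in \mathbb{S}^n_\pp$ for all $\varepsilon > 0$, and $f(X_\varepsilon) = (v + \varepsilon \mathbf{1})^\tp A (v + \varepsilon \mathbf{1}) \to v^\tp A v < 0$ as $\varepsilon \downarrow 0$, so $f(X_\varepsilon) < 0$ for all small $\varepsilon$; the converse is the contrapositive of the equivalence above.

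Finally I would note that $f$ is homogeneous of degree two and $\mathbb{S}^n_\pp$ is a cone, so $\inf_{X \in \mathbb{S}^n_\pp} f(X)$ equals $0$ when $A$ is copositive and $-\infty$ otherwise. Hence any algorithm that optimizes, or merely determines the sign of the infimum of, a quadratic polynomial over $\mathbb{S}^n_\pp$ would decide copositivity of $A$ in polynomial time, which would imply $\mathrm{P} = \mathrm{NP}$; by the metric-independence remark this holds for both models. The one place that needs care — and where a too-hasty argument fails — is the noncompactness of $\mathbb{S}^n_\pp$: the infimum is never attained, and a witness $v$ of non-copositivity may have zero entries, so one cannot simply take $X = \diag(v)$ as a feasible point; the perturbation $X_\varepsilon = \diag(v) + \varepsilon I_n$ is what repairs this. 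With a little extra bookkeeping of the gap between $0$ and the negative values produced by rescaling one can also exclude an FPTAS, but the bare NP-hardness asserted in the statement already follows from the above.
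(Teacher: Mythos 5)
Your proposal is correct and follows exactly the paper's route: reduce from matrix copositivity via the quadratic form $f(X)=\diag(X)^\tp A\,\diag(X)$ of Lemma~\ref{lem:BT}, so that the sign of $\inf_{X\in\mathbb{S}^n_\pp}f(X)$ decides copositivity, with the choice of Riemannian metric irrelevant to the complexity. The only difference is that you spell out the density/perturbation step ($X_\varepsilon=\diag(v)+\varepsilon I_n$) and the $0$-versus-$-\infty$ dichotomy, which the paper leaves implicit.
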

\begin{proof}
If we can minimize $f$ in Lemma~\ref{lem:BT} over $\mathbb{S}^n_\pp$, then we can decide if $f(X) \ge 0$ for all $X \in \mathbb{S}^n_\pp$, and thereby whether $A$ is copositive.
\end{proof}

At this point it is worth highlighting the fact that the choice of metric does not play a role in NP-hardness results in optimization. The computational tractability or intractability of computing $\max_{x \in \mathcal{M}} f(x)$ depends only on $\mathcal{M}$ as a set. The choice of Riemannian metric does of course affect the convergence rate of the algorithm. For example, when minimizing a strictly convex $f : \mathbb{R}^n \to \mathbb{R}$, the usual gradient descent in the standard Euclidean metric $\langle v, w\rangle \coloneqq v^\tp w$ is linearly convergent, but gradient descent in the Riemannian metric $\langle v, w\rangle_x \coloneqq v^\tp \nabla^2 f(x)^{-1} w$ is equivalent to Newton's method, which is quadratically convergent. But no Riemannian metric we put on $ \mathbb{R}^n $ will ever yield a polynomial-time algorithm for an NP-hard optimization problem unless $\mathrm{P} = \mathrm{NP}$.

We now prove an analogue of Proposition~\ref{prop:redu} required to extend Corollary~\ref{cor:car} to $\GL(n)/\O(n)$, the quotient model of the Cartan manifold. 
\begin{lemma}\label{lem:psd}
There exists a polynomial-time computable diffeomorphism $\rho: \GL(n)/\O(n) \to \mathbb{S}^n_\pp$.
\end{lemma}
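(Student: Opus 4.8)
The plan is to exhibit $\rho$ and its inverse explicitly, following the same template used for $\varphi_1,\dots,\varphi_5$ in Proposition~\ref{prop:redu} and for $\psi_1,\psi_2$ in Lemma~\ref{lem:stiefel}. The natural candidate comes from polar decomposition: every $X \in \GL(n)$ factors uniquely as $X = PQ$ with $P \in \mathbb{S}^n_\pp$ and $Q \in \O(n)$, where $P = (XX^\tp)^{1/2}$ is the unique positive definite square root of $XX^\tp$. Since the $\O(n)$-action in Table~\ref{tab:quo} is by right multiplication (the last row has $\mathcal{M} = \GL(n)$, $G = \O(n)$), two matrices $X, X'$ lie in the same coset iff $X' = XQ$ for some $Q \in \O(n)$, which happens iff $XX^\tp = X'X'^\tp$, i.e.\ iff their positive definite polar factors agree. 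This makes the map well-defined and injective, and surjective because any $A \in \mathbb{S}^n_\pp$ lies in $\GL(n)$ and is its own polar factor. So I would set
\[
\rho : \GL(n)/\O(n) \to \mathbb{S}^n_\pp, \quad \rho(\lb X \rb) = (XX^\tp)^{1/2}, \qquad \rho^{-1}(A) = \lb A \rb.
\]

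First I would verify well-definedness of $\rho$ (independence of the coset representative, via the observation above) and of $\rho^{-1}$ (here it is automatic since $A$ is a genuine matrix, not a class), then check $\rho \circ \rho^{-1} = \mathrm{id}$ and $\rho^{-1} \circ \rho = \mathrm{id}$: the first reduces to $(AA^\tp)^{1/2} = (A^2)^{1/2} = A$ for $A \in \mathbb{S}^n_\pp$, and the second to the uniqueness of the positive definite polar factor. Next I would argue $\rho$ is a diffeomorphism using the same reasoning as in Proposition~\ref{prop:redu}: $\rho$ is induced by the smooth submersion $\overline{\rho} : \GL(n) \to \mathbb{S}^n_\pp$, $X \mapsto (XX^\tp)^{1/2}$, whose fibers are exactly the $\O(n)$-cosets, so $d\rho$ is an isomorphism at every point; being also a bijection, it is a diffeomorphism by \cite[Theorem~4.14]{Lee13}. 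Finally, for polynomial-time computability: $\rho$ requires forming $XX^\tp$ and extracting a symmetric positive definite square root, which can be done via a symmetric eigenvalue decomposition $XX^\tp = U\Lambda U^\tp$ and setting $(XX^\tp)^{1/2} = U\Lambda^{1/2}U^\tp$, all of $\mathcal{O}(n^\omega)$ complexity as cited in Proposition~\ref{prop:redu}; $\rho^{-1}$ is the identity on matrices (just reinterpret $A$ as a coset representative) and is trivially computable.

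The main subtlety — the counterpart of the QR-decomposition caveat flagged after Corollary~\ref{cor:St} — is that the matrix square root is not a rational map, so although $\rho$ can be evaluated in polynomial time on any given input, composing $\rho$ with a polynomial objective will generally not yield a polynomial in the entries of $X$; consequently, just as with $\varphi_4^{-1}$, $\varphi_5^{-1}$, $\theta_2$ elsewhere in the paper, the hardness transfer to $\GL(n)/\O(n)$ will have to be stated in the oracle model rather than the bit model. Other than this expected-and-harmless point, the proof is a routine verification; an alternative that sidesteps the square root entirely is to use $\rho(\lb X \rb) = XX^\tp$ directly (landing in $\mathbb{S}^n_\pp$ since $X$ is invertible), with inverse $A \mapsto \lb A^{1/2}\rb$, but this merely relocates the square root to $\rho^{-1}$, so it is cosmetic; I would present whichever makes the subsequent corollary cleanest.
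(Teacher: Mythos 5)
Your proof is correct, but your primary map is not the one the paper uses, and the difference matters downstream. The paper takes $\rho(\lb X \rb) = X^\tp X$ — a quadratic, hence polynomial, map — and puts all the non-rational work into the inverse, $\rho^{-1}(S) = \lb R_S \rb$ with $S = R_S^\tp R_S$ the Cholesky decomposition (your "alternative," up to a transpose, with Cholesky in place of the symmetric square root). Your primary choice $\rho(\lb X \rb) = (XX^\tp)^{1/2}$ proves the lemma as stated just as well: it is well-defined, bijective, a diffeomorphism by the same submersion argument, and polynomial-time computable via an eigendecomposition. But the lemma exists to feed Corollary~\ref{cor:car2}, where the hard instance must be $f \circ \rho$ for the copositivity quadratic $f$ of Lemma~\ref{lem:BT}, and that corollary asserts hardness of maximizing \emph{polynomial functions} over $\GL(n)/\O(n)$. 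With the paper's $\rho$, the composition $f \circ \rho$ is an $\O(n)$-invariant polynomial on $\GL(n)$, hence a polynomial function on the quotient in the sense of Definition~\ref{def:polyquo}; with your square-root map it is not a polynomial at all, so the reduction would not land inside the function class being discussed — your flagged "bit model vs.\ oracle model" caveat understates the issue. So you should commit to the quadratic map and relocate the factorization to the inverse, exactly as you anticipate. One further point in your favor: you correctly observe that under the right-multiplication convention the paper uses for $\P_1(k,n)$, the invariant quantity is $XX^\tp$ rather than the paper's $X^\tp X$ (which is invariant under \emph{left} $\O(n)$-multiplication); this is a harmless convention mismatch in the paper, and your version is the internally consistent one.
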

\begin{proof}
The required map and its inverse are given by
\begin{alignat*}{2}
\rho &: \GL(n)/\O(n) \to \mathbb{S}^n_\pp, & \rho(\lb X \rb) &= X^\tp X, \\
\rho^{-1} &: \mathbb{S}^n_\pp \to \GL(n)/\O(n),\quad &\rho^{-1}(S) &= \lb R_S \rb,
\end{alignat*}
where $R_S$ is the unique upper-triangular matrix in the Cholesky decomposition $S = R_S^\tp R_S$. It is routine to verify that $\rho^{-1}$ is indeed the inverse of $\rho$; and it is computable in polynomial time since Cholesky decomposition is. That $\rho$ is a diffeomorphism follows the same argument used in the proof of Proposition~\ref{prop:redu}.
\end{proof}

For the next result, we will again have to resort to oracle complexity, as the commentary after the proof of Corollary~\ref{cor:St} about QR decomposition also applies to Cholesky decomposition.
\begin{corollary}[Cartan optimization is NP-hard]\label{cor:car2}
Unless $\mathrm{P} = \mathrm{NP}$,  there is no oracle-$\mathrm{FPTAS}$ that is polynomial in $n$ for maximizing polynomial functions over $\GL(n)/\O(n)$.
\end{corollary}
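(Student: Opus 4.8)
The plan is to transport the hardness of Corollary~\ref{cor:car} along the diffeomorphism of Lemma~\ref{lem:psd}, exactly in the style by which Corollary~\ref{cor:St} transported Theorem~\ref{thm:NP2} from $\Gr_\pi(k,n)$ to $\St(k,n)/\GL(k)$. Recall from Lemma~\ref{lem:BT} and Corollary~\ref{cor:car} that for a symmetric $A \in \mathbb{R}^{n \times n}$ the quadratic polynomial $f_A(X) = \sum_{i,k=1}^n a_{ik}\, x_{ii} x_{kk}$ satisfies $f_A \ge 0$ on $\mathbb{S}^n_\pp$ if and only if $A$ is copositive, so optimizing $f_A$ over $\mathbb{S}^n_\pp$ is NP-hard. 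First I would compose $f_A$ with $\rho : \GL(n)/\O(n) \to \mathbb{S}^n_\pp$, $\lb X \rb \mapsto X^\tp X$, to form $g_A \coloneqq f_A \circ \rho$, so that $g_A(\lb X \rb) = f_A(X^\tp X)$. Since replacing a representative $X$ by $QX$ with $Q \in \O(n)$ leaves $X^\tp X$ unchanged, $g_A$ is $\O(n)$-invariant; and as a polynomial in the entries of $X \in \GL(n)$ it has degree $4$, independent of $n$. Hence, by Definition~\ref{def:polyquo}, $g_A$ is a genuine polynomial function on $\GL(n)/\O(n)$, and it can be evaluated at any point at essentially the same cost as $f_A$ (one extra matrix product).

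Next I would feed $-g_A$ to the hypothetical oracle-$\mathrm{FPTAS}$ for maximizing polynomial functions over $\GL(n)/\O(n)$, answering each of its value queries at a coset $\lb X \rb$ by returning $-f_A(X^\tp X)$. In time polynomial in $n$ and $1/\varepsilon$ this yields a coset $\lb X_\varepsilon \rb$; since $\rho$ is a bijection carrying $g_A$ to $f_A$, its image $\rho(\lb X_\varepsilon \rb) = X_\varepsilon^\tp X_\varepsilon \in \mathbb{S}^n_\pp$ is an $\varepsilon$-approximate minimizer of $f_A$ over $\mathbb{S}^n_\pp$. Choosing $\varepsilon$ inverse-polynomial in $n$ and the bit-length of $A$, as in the proof of Theorem~\ref{thm:NP2}, one then reads off copositivity of $A$ in polynomial time, forcing $\mathrm{P} = \mathrm{NP}$. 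As in Corollaries~\ref{cor:St} and~\ref{cor:NPSt}, the statement is phrased in the oracle model rather than the bit model precisely because $\rho^{-1}$ is given by a Cholesky decomposition, which extracts square roots and branches and so is not a rational map transporting defining parameters to defining parameters --- the exact analogue of the QR caveat recorded after the proof of Corollary~\ref{cor:St}.

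The geometry here is free: $\rho$ is already constructed and shown polynomial-time computable in Lemma~\ref{lem:psd}, and the invariance and degree bounds for $g_A$ are routine. The one delicate point --- and where I would spend my care --- is the interface between Definition~\ref{def:FPTAS} and the copositivity reduction: $f_A$ is \emph{unbounded} on the (noncompact, coned) set $\mathbb{S}^n_\pp$ exactly when $A$ fails to be copositive, so one cannot literally speak of ``the maximum of $-f_A$'' uniformly across all instances. The fix I would pursue is to argue that an oracle-$\mathrm{FPTAS}$ still lets us \emph{decide} which regime we are in: on copositive instances the relevant supremum is the fixed value $0$, and the $\varepsilon$-guarantee, together with a short evaluation of $-f_A$ at the point the scheme returns, is what separates this from the non-copositive case. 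Everything else is a mechanical composition of the maps of Section~\ref{sec:other} and Lemma~\ref{lem:psd}.
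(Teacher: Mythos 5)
Your proposal follows essentially the same route as the paper's proof: compose the copositivity quadratic of Lemma~\ref{lem:BT} with the map $\rho$ of Lemma~\ref{lem:psd}, query the hypothetical oracle-$\mathrm{FPTAS}$ on $\GL(n)/\O(n)$, and pull the answer back to $\mathbb{S}^n_\pp$ to contradict Corollary~\ref{cor:car}. Your flagged concern about $f_A$ being unbounded on the noncompact cone is a fair point that the paper's own two-line proof does not address either, so no gap relative to the paper's argument.
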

\begin{proof}
By Lemma~\ref{lem:psd}, $\rho$ and $\rho^{-1}$ are both polynomial-time computable. We may maximize the function $f : \mathbb{S}^n_\pp \to \mathbb{R}$ in Lemma~\ref{lem:BT} by querying the function $f \circ \rho : \St(k, n)/\GL(k) \to \mathbb{R}$ as an oracle and using $\rho^{-1}$ to map a solution in $ \St(k, n)/\GL(k)$ back to $ \mathbb{S}^n_\pp$ in polynomial-time. By Corollary~\ref{cor:car}, it must be NP-hard to maximize over $\GL(n)/\O(n)$.
\end{proof}

\section{LP is polynomial-time}\label{sec:linear}

The hardness results involving quadratic programming are the best possible in the sense that unconstrained linear programs are polynomial-time solvable for the Stiefel, Grassmann, and Cartan manifolds. They have closed-form solutions computable in polynomial time, easy exercises along the lines of those in \cite{GVL}, but recorded below for completeness.
\begin{lemma}[Unconstrained LP over Stiefel, Grassmann, and Cartan manifolds]
\begin{enumerate}[\upshape (a)]
\item \label{lem:LP-item1} For any $A \in \mathbb{R}^{n\times k}$,
\[
\max_{X \in \V(k, n)} \tr(A^\tp X) = \sum_{i=1}^k  \sigma_i
\]
is attained at $X =U V^\tp$ where $A = U\Sigma V^\tp $ is a condensed singular value decomposition with $U, V \in \V(n, k)$, $\Sigma = \diag(\sigma_1, \dots, \sigma_k ) \in \mathbb{R}^{k\times k}$, $\sigma_1 \ge  \dots \ge  \sigma_k \ge  0$.  

\item \label{lem:LP-item2} For any $A \in \mathbb{R}^{n\times n}$,
\[
\max_{P \in \Gr(k, n)} \tr(A^\tp P) = \sum_{i=1}^k \lambda_i
\]
is attained at $P = Q_k Q_k^\tp$ where $(A+A^\tp)/2 = Q \Lambda Q^\tp$ is an eigenvalue decomposition with $Q\in \O(n)$, $\Lambda = \diag(\lambda_1, \dots, \lambda_n) \in \mathbb{R}^{n\times n}$, $\lambda_1 \ge  \dots \ge  \lambda_n$.  

\item \label{lem:LP-item3} For any $A \in \mathbb{R}^{n\times n}$,
\[
\sup_{X \in \mathbb{S}^n_\pp} \tr(A^\tp X) =
\begin{cases}
0 & \text{if } -(A+A^\tp) \in \mathbb{S}^n_\p, \\
+\infty & \text{otherwise}.
\end{cases}
\]
\end{enumerate}
\end{lemma}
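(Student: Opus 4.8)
The plan is to reduce each of the three suprema to a classical trace inequality for symmetric matrices. The one observation used throughout is that $\tr(A^\tp M)=\tr(MA)=\tr\!\bigl(\tfrac12(A+A^\tp)M\bigr)$ whenever $M=M^\tp$, so in parts (b) and (c) we may replace $A$ by its symmetric part $B\coloneqq\tfrac12(A+A^\tp)$ at the outset; part (a) involves no such symmetry and is handled directly.

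For part (a), write $A=U\Sigma V^\tp$ as in the statement and put $Z\coloneqq U^\tp X V$, so that $\tr(A^\tp X)=\tr(\Sigma U^\tp X V)=\sum_{i=1}^k\sigma_i z_{ii}$. Since $X^\tp X=I_k$ makes $X$ a linear isometry and $U,V$ have orthonormal columns, each column of $XV$ and each column of $U$ is a unit vector, so $|z_{ii}|=|u_i^\tp(XV)e_i|\le 1$ by Cauchy--Schwarz; hence $\tr(A^\tp X)\le\sum_{i=1}^k\sigma_i$, with equality at $X=UV^\tp$, for which $Z=(U^\tp U)(V^\tp V)=I_k$ and $UV^\tp\in\V(k,n)$. (This is just the relevant special case of von Neumann's trace inequality, spelled out.) For part (b), diagonalize $B=Q\Lambda Q^\tp$ with $Q\in\O(n)$ and set $\widetilde P\coloneqq Q^\tp P Q$; conjugation by $Q$ preserves the defining relations $P^2=P=P^\tp$ and $\tr P=k$, so $\widetilde P\in\Gr(k,n)$, and $\tr(A^\tp P)=\tr(BP)=\tr(\Lambda\widetilde P)=\sum_{i=1}^n\lambda_i\,\widetilde p_{ii}$. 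The diagonal $d\coloneqq\diag(\widetilde P)$ lies in $\Delta_{k,n}$ --- the elementary containment already invoked in the proof of Proposition~\ref{prop:cli2}, and no surjectivity is needed here --- and a one-line rearrangement (the Ky Fan maximum principle: for $\lambda_1\ge\dots\ge\lambda_n$ and $d\in\Delta_{k,n}$ one has $\sum_i\lambda_id_i-\sum_{i\le k}\lambda_i=\sum_{i\le k}\lambda_i(d_i-1)+\sum_{i>k}\lambda_id_i\le\lambda_k\bigl(\sum_id_i-k\bigr)=0$) gives the bound $\sum_{i=1}^k\lambda_i$, attained at $d=(1,\dots,1,0,\dots,0)$, i.e.\ $P=Q_kQ_k^\tp$.

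For part (c), with $B=\tfrac12(A+A^\tp)$ we have $\tr(A^\tp X)=\tr(BX)$ for $X\in\mathbb{S}^n_\pp$. If $-B\in\mathbb{S}^n_\p$, then $\tr(BX)=-\tr\bigl((-B)X\bigr)\le 0$ since the trace of a product of two positive semidefinite matrices is nonnegative, while $X=\varepsilon I_n\to 0$ forces $\tr(BX)=\varepsilon\tr B\to 0$, so the supremum is $0$ (approached, not attained). If $-B\notin\mathbb{S}^n_\p$, then $B$ has a positive eigenvalue: pick a unit eigenvector $v$ of $B$ with eigenvalue $\mu>0$ and take $X_t\coloneqq\varepsilon I_n+t\,vv^\tp\in\mathbb{S}^n_\pp$, so that $\tr(BX_t)=\varepsilon\tr B+t\mu\to+\infty$. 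The whole lemma is essentially bookkeeping; the only points that need a modicum of care are verifying that the exhibited optimizers genuinely lie on the respective manifolds and, in part (c), that $0$ is a genuine supremum rather than a maximum --- there is no real obstacle here.
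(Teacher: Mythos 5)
Your proof is correct and follows essentially the same route as the paper's: reduce each part via SVD (resp.\ eigendecomposition, resp.\ symmetrization) to a linear problem over the diagonal entries and bound that directly. Your write-up is in fact slightly tighter in places --- the explicit rearrangement inequality in (b) replaces the paper's informal exchange argument, and in (c) you spell out the equivalence between ``$\tr(BX)\le 0$ for all $X\in\mathbb{S}^n_\pp$'' and ``$-B\in\mathbb{S}^n_\p$'' that the paper leaves implicit --- but these are refinements of the same argument, not a different one.
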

\begin{proof}
The problem in \eqref{lem:LP-item1} transforms into
\[
\max_{X \in \V(k, n)} \tr(A^\tp X) = \max_{X \in \V(k, n)} \tr(\Sigma^\tp X) = \max_{X \in \V(k, n)} \sum_{i=1}^k \sigma_i v_{ii}.
\]
For any $X \in \V(k, n)$, the diagonal entries $x_{ii} \le  1$, so the maximum is bounded by $\sum_{i=1}^k \sigma_i$. On the other hand, $V = I_k$ achieves this bound.

The problem in \eqref{lem:LP-item2} transforms into
\[
\max_{P \in \Gr(k, n)} \tr(A^\tp P) = \max_{P \in \Gr(k, n)} \tr\biggl(\biggl[\frac{A+A^\tp}{2}\biggr]^\tp P \biggr) = \max_{P \in \Gr(k, n)} \tr(\Lambda^\tp P) = \max_{P \in \Gr(k, n)} \sum_{i=1}^k \lambda_i p_{ii}.
\]
For any $P \in \Gr(k, n)$, the diagonal entries satisfy $0 \le  p_{ii} \le  1$, $p_{11} + \dots + p_{nn}  = k$. Under these constraints, the function $\sum_{i=1}^k \lambda_i p_{ii}$ must be upper-bounded by $\sum_{i=1}^k \lambda_i$: If one of $p_{ii}$ is not $1$, we may always change it to $1$ and decrease the value of some $p_{ii}$, $i > k$, and increase the function value. After at most $k$ such changes, we arrive at the value $\sum_{i=1}^k \lambda_i$. On the other hand, $P = I_k$ achieves this bound. 

For the problem in \eqref{lem:LP-item3}, if there is an $X\in \mathbb{S}^n_\pp$ with $\tr(A^\tp X) > 0$, then $\tr(A^\tp (\mu X)) = \mu \tr(A^\tp X) \to +\infty$ as $\mu \to  +\infty$. Otherwise, $\tr(A^\tp X)\leq 0$ for all $X \in \mathbb{S}^n_\pp$ and $X\to 0$ gives the least upper bound.
\end{proof}

\section{Conclusion}\label{sec:con}

In case one thinks that we might get around these intractability results by imposing convexity on the objective function. We remind the reader that for any compact manifold without boundary, which includes $\Gr(k,n)$, $\V(k,n)$, and $\O(n)$, the only continuous geodesically convex function is the constant function. Given the recent interests in convex optimization over metric spaces \cite{LLN24}, it is worth noting that the previous statement holds true for any metric space as well. Indeed it holds true for many manifolds that are noncompact \cite{Yau}. For manifolds that do support nonconstant convex functions,  they are often nowhere dense within the space of smooth functions \cite{YK24}.

Nevertheless, one positive way to view the hardness results in this article is that they show that current pursuits in manifold optimization, which are exclusively focused on local optimization, are most likely the right way to go. Short of having $\mathrm{P} = \mathrm{NP}$, manifold optimization cannot become a topic in global optimization like convex optimization.

\subsection*{Future work} Our constructions show that unconstrained cubic and quartic programming over $\O(n)$ and $\V(k,n)$ are NP-hard for all fixed $k \in \mathbb{N}$. For $k = 1$, quadratic programming over $\V(1,n)$ is well-known to be polynomial-time solvable. It is also common knowledge that the NP-hard quadratic assignment problem may be written as a linearly constrained quadratic optimization problem over $\O(n)$ \cite{pardalos1994}. These observations leave open the complexity of unconstrained quadratic programming over $\O(n)$ and $\V(k,n)$ for $k \ge 2$.

\bibliographystyle{abbrv}

\end{document}